\def\@settitle{\begin{center}\baselineskip14\p@\relax\bfseries{\large\@title}\thispagestyle{empty}\end{center}}
\def\@setauthors{%
  \begingroup
  \def\thanks{\protect\thanks@warning}%
  \trivlist
  \centering\footnotesize \@topsep30\p@\relax
  \advance\@topsep by -\baselineskip
  \item\relax
  \author@andify\authors
  \def\\{\protect\linebreak}%
  {\authors}%
  \ifx\@empty\contribs
  \else
    ,\penalty-3 \space \@setcontribs
    \@closetoccontribs
  \fi
  \endtrivlist
  \endgroup
}
\def\maketitle{\par
  \@topnum\z@ 
  \@setcopyright
  \thispagestyle{firstpage}
  \ifx\@empty\shortauthors \let\shortauthors\shorttitle
  \else \andify\shortauthors
  \fi
  \@maketitle@hook
  \begingroup
  \@maketitle
  \toks@\@xp{\shortauthors}\@temptokena\@xp{\shorttitle}%
  \toks4{\def\\{ \ignorespaces}}
  \edef\@tempa{%
    \@nx\markboth{\the\toks4
      \@nx
      {\the\toks@}}{\the\@temptokena}}%
  \@tempa
  \endgroup
  \c@footnote\z@
  \@cleartopmattertags
}
\newtheorem{proposition}{Proposition}[section]
\newtheorem{theorem}[proposition]{Theorem}
\newtheorem{lemma}[proposition]{Lemma}
\newtheorem{corollary}[proposition]{Corollary}
\theoremstyle{definition}
\newtheorem{example}[proposition]{Example}
\newtheorem{remark}[proposition]{Remark}
\newcommand{\Q}{\mathbb Q}
\newcommand{\R}{\mathbb R}
\newcommand{\C}{\mathbb C}
\newcommand{\ce}{c_{\ue}}
\newcommand{\Digit}[1][r]{\mathcal{A}_{#1}}
\newcommand{\Fourier}[1][f]{\mathcal{F}[#1]}
\newcommand{\rapid}[1][V]{\mathscr{S}(#1)}
\newcommand{\lzf}[3][\ue]{\Phi_{#1}(#2;\,#3)}
\newcommand{\dlzf}[3][\ud]{\Phi^*_{#1}(#2;\,#3)}
\newcommand{\vlzf}[2]{\bs{\Phi}(#1;\,#2)}
\newcommand{\vdlzf}[2]{\bs{\Phi}^*(#1;\,#2)}
\newcommand{\clzf}[2]{\bs{\Psi}(#1;\,#2)}
\newcommand{\cdlzf}[2]{\bs{\Psi}^*(#1;\,#2)}
\newcommand{\sign}[1][\ul{a}]{(\sqrt{-1})^{|#1|}}
\newcommand{\pmat}[1]{\begin{pmatrix} #1 \end{pmatrix}}
\newcommand{\smat}[1]{\left(\begin{smallmatrix} #1 \end{smallmatrix}\right)}
\newcommand{\set}[2]{\left\{#1 ;\; #2 \right\}}
\newcommand{\s}{\ul{s}}
\newcommand{\ds}{\displaystyle}
\newcommand{\efactor}{\mathcal{E}}
\newcommand{\w}{\ul{w}}
\newcommand{\kvsup}[1]{\kappa_{\ue}(#1)}
\newcommand{\localZ}[2][\ul{a}]{Z_{#1}(#2;\,\ul{s})}
\newcommand{\dlocalZ}[2][\ul{b}]{Z^*_{#1}(#2;\,\ul{s})}
\newcommand{\revProd}[1][j=1]{\prod_{#1}^r\rule{0pt}{10pt}^{\rm rev}}
\newcommand{\bstrut}{\rule[-2pt]{0pt}{12pt}}
\newcommand{\ctt}{\mathtt{c}}
\newcommand{\stt}{\mathtt{s}}
\newcommand{\ul}[1]{\underline{#1}}
\newcommand{\ud}{\boldsymbol{\delta}}
\newcommand{\ue}{\boldsymbol{\varepsilon}}
\newcommand{\bs}[1]{\boldsymbol{#1}}
\newcommand{\Ir}[1][r]{\mathcal{I}_{#1}}
\newcommand{\dIr}{\mathcal{I}^0_r}
\newcommand{\Oe}[1][\ue]{\mathcal{O}_{#1}}
\newcommand{\dOe}{\mathcal{O}^*_{\ud}}
\newcommand{\kv}[1][\ul{a}]{\bs{\kappa}(#1)}
\newcommand{\kvsub}[2][\ul{a}]{\kappa_{#2}(#1)}
\newcommand{\pcos}[2][\ul{a}]{Q_{#1}(#2)}
\newcommand{\Pcos}[2][\ul{a}]{Q_{#1}{#2}}
\newcommand{\uluelement}[1][j]{w_j}
\newcommand{\innV}[2]
	{\left\langle \mskip2.0 mu #1 \mskip2.0 mu | 
	\mskip2.0 mu #2 \mskip2.0 mu \right\rangle}
\newcommand{\bl}[1]{\textcolor{blue}{#1}}
\title
{
On gamma matrices of local zeta functions associated with homogeneous cones}
\author[H.\ Nakashima]
{Hideto Nakashima}
\address{
The Institute of Statistical Mathematics\\ 
Midori-cho 10-3, Tachikawa, Tokyo 190-8562, 
Japan
}
\email{hideto@ism.ac.jp}
\keywords{
Zeta functions,
functional equations,
prehomogeneous vector spaces,
homogeneous cones.
}
\subjclass[2010]{
11M41, 
11S90, 
22E25. 
}
\begin{document}

\begin{abstract}
The purpose of this paper is 
to investigate coefficient matrices of functional equations of 
zeta functions associated with homogeneous cones,
which are given explicitly in the previous paper,
in detail.
We prove that the coefficient matrix can be decomposed into variable-wise matrices
regardless of the choice of homogeneous cones.
Moreover,
under a suitable condition,
we show that
the associated zeta functions admit a kind of completion forms.
\end{abstract}

\maketitle

\section{Introduction}
A theory of prehomogeneous vector spaces, constructed by Sato~\cite{MSato} (see also Sato--Shintani~\cite{MSatoShintani}, Kimura~\cite[Introduction]{Kimura}),
provides a systematic method to construct zeta functions satisfying functional equations.
Gamma matrices, which are the coefficient matrices of functional equations, 
possess important information of zeta functions,
and they have been determined for many class of prehomogeneous vector spaces
(cf.\ Shintani~\cite{Shintani}, Muro~\cite{Muro1980, Muro1986}, Sato and Shintani~\cite{MSatoShintani}).
In the case of the Riemann zeta function,
we are able to decompose the gamma matrix into two factors to
obtain a completion of functional equation.
This means that
$\zeta(s)$ can be completed to the Riemann xi function 
$\xi(s):=
2^{-1}s(s-1)
\pi^{-s/2}\Gamma(s/2)\zeta(s)$
whose functional equation has a higher symmetric form
$\xi(1-s)=\xi(s)$.
Here, $\Gamma(s)$ is the ordinary gamma function.
It is confirmed that a similar phenomenon occurs for zeta functions associated with prehomogeneous vector spaces of binary cubic form 
(cf. Ohno~\cite{Ohno} and Nakagawa~\cite{Nakagawa}, see also Datsovsky--Wright~\cite{DatsovskyWright}),
but 
it is not known whether gamma matrices of other prehomogeneous vector spaces 
admit a kind of completion or diagonalization,
or weaker form that
they can be decomposed into a product of variable-wise matrices.

Let us explain these phenomena
by a concrete example.
Let $(G,\rho,V)$ be a prehomogeneous vector space in Sato~\cite[\S\S7.1 Example (A)]{FSatoI}.
We use all notation in that paper without comments, and assume $v(L^{(1)*})=1$ for simplicity.
For dual zeta functions, we choose $E$ as a $\Q$-regular subspace.
Put $\ctt(z)=\cos\frac{\pi z}{2}$, $\stt(z)=\sin\frac{\pi z}{2}$ for $z\in\C$,
and
$\tau(\ul{s}):=(s_1+s_2+s_3-1,\ 1-s_3,\ 1-s_2)$ for $\ul{s}=(s_1,s_2,s_3)\in\C^3$.
Then, we have 
\[
\pmat{\xi_+(L^*_E;\,\tau(\ul{s}))\\ \xi_-(L^*_E;\,\tau(\ul{s}))}
=
\frac{2\Gamma(s_2)\Gamma(s_3)}{(2\pi)^{s_2+s_3}}
\pmat{
\ctt(s_2+s_3)
&
\ctt(s_2-s_3)\\
\ctt(s_2-s_3)
&
\ctt(s_2+s_3)}
\pmat{\xi_+(L;\ul{s})\\ \xi_-(L;\,\ul{s})}
\]
by Theorem~3 (iii) of that paper~\cite{FSatoI}.
This gamma matrix can be obviously diagonalized, 
and moreover using Euler's reflection formula and Legendre's duplication formula 
of the gamma function, 
we obtain a functional equation with a higher symmetric form as follows.
Set
\[
B(\ul{s})=\pi^{-\tfrac{s_2+s_3}{2}}
\pmat{\Gamma\bigl(\frac{s_2}{2}\bigr)\Gamma\bigl(\frac{s_3}{2}\bigr)&0\\
0&\Gamma\bigl(\frac{s_2+1}{2}\bigr)\Gamma\bigl(\frac{s_3+1}{2}\bigr)}
\cdot\frac{1}{\sqrt{2}}\pmat{1&1\\1&-1}.
\]
Then, we have
\[
\bs{\eta}^*_E\bigl(\tau(\ul{s})\bigr)
=
\pmat{1&0\\0&-1}
\bs{\eta}_E(\ul{s}),\]
where
\[\bs{\eta}^*_E(\ul{s})=B(\ul{s})\pmat{\xi_+(L^*_E;\,\ul{s})\\ \xi_-(L^*_E;\,\ul{s})},\quad
\bs{\eta}_E(\ul{s})
=
B(\ul{s})\pmat{\xi_+(L;\ul{s})\\ \xi_-(L;\,\ul{s})}.
\]

In this paper,
we shall say that vector-valued zeta functions 
$\bs{\zeta}(\ul{s})$ and $\bs{\zeta}^*(\ul{s})$
associated with regular prehomogeneous vector spaces 
can be completed if
there exist square matrices $B(\ul{s})$ and $B^*(\ul{s})$ such that
\[\bs{\xi}^*(\tau(\s))=\efactor\bs{\xi}(\s)\quad\text{where}\quad
\bs{\xi}(\s):=B(\s)\bs{\zeta}(\s)\text{ and }\bs{\xi}^*(\s):=B^*(\s)\bs{\zeta}^*(\s),\]
where $\efactor$ is the so-called $\varepsilon$-factor,
that is, a diagonal matrix with entries $\exp(\theta\sqrt{-1})$ $(\theta\in\R)$.
Since $B(\ul{s})$ and $B^*(\ul{s})$ may be different matrices,
there is a trivial solution $B(\s)=A(\s)$, $B^*(\s)=I$ and $\efactor=I$ ($I$ is the identity matrix)
and hence we would like to make $B(\s)$ and $B^*(\s)$ as similar as possible.
Then, it can be confirmed that
the other dual zeta functions in \S\S7.1 of \cite{FSatoI} can be also completed
in this sense.

On the other hand,
for another example \cite[\S\S7.2 Example (B)]{FSatoI} (cf.\ Shintani~\cite[Chapter 1]{Shintani}), 
the following functional equation in \cite[Theorem~4 (ii)]{FSatoI} seems not to be diagonalized, and hence not to be completed.
\[
\begin{array}{r@{\ }c@{\ }l}
\ds
\pmat{\xi_1(s_1,\frac{3}{2}-s_1-s_2)\\
\xi_2(s_1,\frac32-s_1-s_2)}
&=&
\ds
\frac{\sqrt{\pi}}{2}
\Bigl(\frac{2}{\pi}\Bigr)^{s_1+2s_2}
\Gamma(s_2)\Gamma\Bigl(s_1+s_2-\frac12\Bigr)\\
& &
\quad
\times
\pmat{\stt(s_1+2s_2)&\stt(s_1)\\ \ctt(s_1)&\ctt(s_1+2s_2)}
\pmat{\xi^*_1(s_1,s_2)\\ \xi^*_2(s_1,s_2)}.
\end{array}
\]
However,
if we set 
\[
\bs{\eta}(\ul{s})=\frac{1}{\sqrt{2}}\pmat{1&1\\ 1&-1}
\pmat{\xi_1(s_1,s_2)\\
\xi_2(s_1,s_2)},\quad
\bs{\eta}^*(\ul{s})=\frac{1}{\sqrt{2}}\pmat{1&1\\ 1&-1}
\pmat{\xi_1^*(s_1,s_2)\\ \xi_2^*(s_1,s_2)},
\]
then the gamma matrix can be decomposed into
a product of variable-wise orthogonal matrix and diagonal matrices in a simple form as
\[
\bs{\eta}(\tau(\ul{s}))
=
\frac{2^\alpha\Gamma(\alpha)}{\pi^\alpha}
\pmat{\ctt(\alpha)&0\\0&-\stt(\alpha) }
\cdot\pmat{1&1\\-1&1}
\cdot
\frac{2^{s_2}\Gamma(s_2)}{\pi^{s_2}}
\pmat{\ctt(s_2)&0\\0&\stt(s_2)}
\bs{\eta}^*(\ul{s}),
\]
where $\tau(\ul{s})=(s_1,\frac32-s_1-s_2)$ and $\alpha=s_1+s_2-\frac12$.
These observations seem to be new.
In general,
except for some particular reductive cases
(cf.\ \cite{SatakeFaraut}, \cite{DatsovskyWright} and \cite{Thor}),
it is not known whether gamma matrices associated with
prehomogeneous zeta functions can be decomposed into matrices in a simple form or not,
or completed or not.

In this paper,
we consider this problem for prehomogeneous vector spaces
associated with homogeneous open convex cones containing no entire line
(homogeneous cones for short in what follows)
studied in the previous paper~\cite{N2020},
and we show that the associated gamma matrices can always be decomposed into
variable-wise matrices.
Moreover,
we see that the associated zeta functions have completions
for a certain class of homogeneous cones.
We actually work with local zeta functions (see \eqref{eq:lzf} for definition)
instead of their zeta functions for some reasons.
One reason is that 
the gamma matrix of zeta functions can be described by using 
that of local zeta functions
(cf.\ \eqref{eq:FEzeta}, see also \cite{N2020}),
and the other is that
we can understand from where the conjugate matrix comes
(see Lemma~\ref{lemma:transform}).
It is worthy to mention that our prehomogeneous vector spaces
are not reductive but solvable.

We now describe the contents of this paper in more detail.
Let $\Omega$ be a homogeneous cone of rank $r$ in an $n$-dimensional real vector space $V$.
Then,
there exists a split solvable Lie group $H$ acting on $\Omega$ linearly and simply transitively
(cf.\ Vinberg \cite{Vinberg}),
and the pair $(H,V)$ admits a structure of real prehomogeneous vector spaces
(cf.\ Gindikin~\cite{Gindikin64}).
We denote the associated basic relative invariants by
$\Delta_1(x),\dots,\Delta_r(x)$ $(x\in V)$ 
(cf.\ Ishi~\cite{Ishi2001}).
The space $V$ admits a weight space decomposition $V=\bigoplus_{1\le j\le k\le r}V_{kj}$ with $\dim V_{jj}=1$ $(j=1,\dots,r)$.
Set $n_{kj}=\dim V_{kj}$ $(1\le j\le k\le r)$.
Let $\ul{p}$, $\ul{q}$ and $\ul{d}$ be the vectors described by using $n_{kj}$
(see \eqref{def:structure constants} for definition).
Open $H$-orbits in $V$ can be parametrized by $\ue\in\Ir:=\{1,-1\}^r$,
and to each open orbit,
we associate a local zeta function $\lzf{f}{\ul{s}}$
(see \eqref{eq:lzf} for definition).
We identify the dual vector space $V^*$ of $V$ with $V$ through
a suitable fixed inner product $\innV{\cdot}{\cdot}$ of $V$.
Then, the dual cone $\Omega^*$ of $\Omega$, defined by using this inner product,
is also a homogeneous cone on which $H$ acts linearly and simply transitively.
We denote by $\sigma$ and $\sigma_*$ the multiplier matrices of $\Omega$ and $\Omega^*$,
respectively (see \eqref{eq:multiplier matrix} for definition).

Set $\Digit=\{0,1\}^r$.
We introduce a suitable order in $\Digit$ in Section~\ref{sect:preliminary}.
Let $\mathrm{sgn}$ be the sign function of $\R^{\times}=\R\setminus\{0\}$,
that is,
$\mathrm{sgn}(x)=x/|x|$, and we set $\mathrm{sgn}(0):=0$.
Let $\omega^{s,a}$ $(s\in\C,\ a=0,1)$ be the quasi-character of $\R^{\times}$
defined by $\omega^{s,a}(x):=\mathrm{sgn}(x)^a|x|^s$.
Using this, we introduce zeta distributions $\localZ{f}$ by
\[
\localZ{f}
:=
\int_V \prod_{i=1}^r\omega^{s_i,a_i}\bigl(\Delta_i(x)\bigr)\,f(x)d\mu(x)
\quad
(\ul{a}\in\Digit,\ f\in\rapid,\ \ul{s}\in\C^r),
\]
where $d\mu$ is a suitable $H$-invariant measure on $V$.
We see in Lemma~\ref{lemma:transform} that
$\localZ{f}$ can be described as a linear combination
of local zeta functions $\lzf{f}{\ul{s}}$.
Similarly,
we can introduce zeta distributions $\dlocalZ{f^*}$ $(\ul{b}\in\Digit,\ \ul{s}\in\C^r)$ for $\Omega^*$.

The Fourier transform $\Fourier$ of $f\in\rapid$ is defined as
\[
\Fourier(x) := \int_Vf(y)\exp\bigl(2\pi\sqrt{-1}\innV{x}{y}\bigr)\,dy
\]
where $dy$ is the Euclidean measure on $V$.
Since we have a functional equation with respect to local zeta functions $\lzf{f}{\ul{s}}$ in \cite[Proposition~4.3]{N2020},
zeta distributions $\localZ{f}$ and $\dlocalZ{f^*}$ also admit a functional equation.

The main theorem of this paper, Theorem~\ref{theorem}, shows that
there exist permutation matrices $W_\sigma$ and $W_{\sigma_*}$,
determined by $\sigma$ and $\sigma_*$ respectively,
such that
\[
W_\sigma\cdot\bigl(\localZ{\mathcal{F}\relax[f^*]}\bigr)_{\ul{a}\in\Digit}
=
\mathcal{C}_r\Bigl(\ul{s}\sigma-\frac{1}{2}\ul{p}\Bigr)
W_{\sigma_*}\cdot\bigl(Z^*_{\ul{b}}(f^*;\,\tau(\ul{s}))\bigr)_{\ul{b}\in\Digit},
\]
where $\tau(\ul{s})=(\ul{d}-\ul{s}\sigma)\sigma_*^{-1}$ and
$\mathcal{C}_r(\ul{\alpha})$ $(\ul{\alpha}\in\C^r)$
is a product of variable-wise matrices as follows.
\[
\mathcal{C}_r(\ul{\alpha})
=
2^r
\pmat{I_{2^{r-1}}&0\\0&\sqrt{-1}I_{2^{r-1}}}
\revProd[j=1]\left(
	\prod_{k=j+1}^r\widetilde{P}^{(r)}_{kj}\Bigl(\frac{n_{kj}}{2}\Bigr)
	\cdot
	\widetilde{D}^{(r)}_j(\alpha_j)
\right).
\]
Here,
$\widetilde{D}^{(r)}_j(\alpha)$ are diagonal matrices
and $P^{(r)}_{kj}(\theta)$ are orthogonal matrices
which are defined inductively 
(see Theorem~\ref{theorem} for details).
Note that, for any fixed $j$ and $r$, the matrices $\widetilde{P}_{kj}^{(r)}(\theta)$ $(k>j)$ are commutative.
Moreover,
for non-commutative variables $a_1,\dots,a_r$, we set
$
(\prod\rule{0pt}{8pt}^{\rm rev})\rule{0pt}{8pt}_{i=1\mathstrut}^r a_i:=a_r\times \cdots a_2\times a_1.
$

In order to consider a completion of a functional equation of local zeta functions,
we suppose that the structure constants $n_{kj}$ of $\Omega$ satisfy a condition that
\[
\text{for fixed $m=0,1$, one has }\frac{\pi}{4}\sum_{j<k}\varepsilon_j\delta_kn_{kj}\equiv m\pi\ (\text{mod }2\pi)
\text{ for all }\ue,\ud\in\Ir.
\] 
Introducing an Hadamard matrix $J^{(r)}$ and a diagonal matrix $\Lambda(\ul{\alpha})$ by
\[
J^{(1)}:=\frac{1}{\sqrt{2}}\pmat{1&1\\1&-1},\quad
J^{(j)}:=J^{(j-1)}\otimes J^{(1)},\quad
\Lambda(\ul{\alpha}):=\mathrm{diag}\left(\frac{\pi^{|\ul{\alpha}|/2}}{\Gamma\bigl(2^{-1}(\ul{\alpha}+\ul{a})\bigr)}\right)_{\ul{a}\in\Digit},\]
we set
\[
\begin{array}{l}
\ds
\bs{\Psi}(f;\,\ul{s}):=\Lambda(\ul{s}\sigma-2^{-1}\ul{p})J^{(r)}\cdot\bigl(\lzf{f}{\ul{s}}\bigr)_{\ue\in\Ir},\\
\ds
\bs{\Psi}^*(f^*;\,\ul{s}):=\Lambda(\ul{s}\sigma_*-2^{-1}\ul{q})J^{(r)}\cdot \bigl(\dlzf{f^*}{\ul{s}}\bigr)_{\ud\in\Ir}.
\end{array}
\]
Then, Proposition~\ref{prop:completion} shows that for $\ul{s}\in\C^r$ and $f^*\in\rapid$
\[
\bs{\Psi}(\mathcal{F}\relax[f^*];\,\ul{s})=\efactor \bs{\Psi}^*(f^*;\,\tau(\ul{s})),\quad
\efactor=(-1)^m\,\mathrm{diag}\Bigl((\sqrt{-1})^{|\ul{a}|}\Bigr)_{\ul{a}\in\Digit}.
\]

We organize this paper as follows.
Section~\ref{sect:preliminary} collects the fundamental facts related to homgoeneous cones 
which we need later.
In particular,
we introduce local zeta functions and zeta distributions associated with homogeneous cones.
In Section~\ref{sect:proof_main},
we shall prove that any gamma matrix corresponding to homogeneous cones
can be decomposed into variable-wise diagonal or orthogonal matrices.
We consider a completion of local zeta functions in Section~\ref{sect:completion}.

\section{Preliminaries}
\label{sect:preliminary}

Let $\Omega$ be a homogeneous cone of rank $r$ in an $n$-dimensional vector space $V$.
We summarize basic properties of homogeneous cones which we need in this paper.
There exists a split solvable Lie group $H$ acting on $\Omega$ linearly and simply transitively
(cf.\ Vinberg~\cite{Vinberg}),
and it is known that
the pair $(H,V)$ has a structure of a real prehomogeneous vector space
(cf.\ Gindikin~\cite[p.\ 77]{Gindikin64}).
The associated basic relative invariants are denoted by $\Delta_1(x),\dots,\Delta_r(x)$.
Note that
$\Omega$ can be described as $\Omega=\set{x\in V}{\Delta_1(x)>0,\dots,\Delta_r(x)>0}$.
Open orbits of $H$ in $V$ are parametrized by $\Ir=\{1,-1\}^r$,
and denote an orbit corresponding to $\ue\in\Ir$ by $\Oe$.
Moreover,
$V$ admits a kind of weight space decomposition
\[
V=\bigoplus_{1\le j\le k\le r}V_{kj},\quad
\dim V_{jj}=1\quad(j=1,\dots,r).
\]
We note that $V_{kj}=\{0\}$ may occur for some $j<k$.
The following integers and vectors are used frequently in this paper without any comments:
\begin{equation}
\label{def:structure constants}
\begin{array}{c}
\displaystyle
n_{kj}:=\dim V_{kj},\quad
p_k:=\sum_{j<k}n_{kj},\quad
q_j:=\sum_{k>j}n_{kj},\\[1em]
\displaystyle
\ul{1}=(1,\dots,1),\quad
\ul{p}=(p_1,\dots,p_r),\quad
\ul{q}=(q_1,\dots,q_r),\quad
\ul{d}=\ul{1}+\frac{1}{2}(\ul{p}+\ul{q}).
\end{array}
\end{equation}
If we take a suitable basis $\{c_j\}$ of a dimensional subspace $V_{jj}$ of $V$ ($j=1,\dots,r$), 
then there exist suitable integers $\sigma_{jk}$ $(1\le j,k\le r)$ such that
\begin{equation}
\label{eq:multiplier matrix}
\Delta_j(x_1c_1+\cdots+x_rc_r)
=
x_{1}^{\sigma_{j1}}\cdots x_r^{\sigma_{jr}}\quad(x_1,\dots,x_r\in\R^\times)
\end{equation}
for $j=1,\dots,r$.
A matrix $\sigma=(\sigma_{jk})_{1\le j,k\le r}$,
which contains an information about multipliers of them,
is called the multiplier matrix of $\Omega$
(cf.\ \cite{N2014}).
Note that $\sigma$ is a unimodular matrix,
that is, all entries are non-negative integers and $\det \sigma = 1$.


We denote by $\rapid$ the Schwartz space of rapidly decreasing functions on $V$.
Let $d\mu(x)$ be a suitable invariant measure on $V$.
For $f\in\rapid$, we put
\begin{equation}
\label{eq:lzf}
\lzf{f}{\ul{s}}:=\int_{\Oe}|\Delta_1(x)|^{s_1}\cdots|\Delta_r(x)|^{s_r}f(x)\,d\mu(x)\quad(\ue\in\Ir,\ \ul{s}\in\C^r)
\end{equation}
which is called the local zeta function associated with $\Oe$.
It is known that $\lzf{f}{\ul{s}}$
are absolutely convergent for $\mathrm{Re}\,\ul{s}>\ul{d}\sigma^{-1}$,
and analytically continued to meromorphic functions of $\ul{s}$ in the whole space $\C^r$ (cf.\ \cite{BG69}).
Note that we write $\ul{\bstrut\alpha}>\ul{\bstrut\beta}$ for $\ul{\bstrut\alpha},\,\ul{\bstrut\beta}\in\R^r$ if $\alpha_j>\beta_j$ for all $j=1,\dots,r$.

Let us take and fix a suitable inner product $\innV{\cdot}{\cdot}$ in $V$.
The dual vector space $V^*$ of $V$ is identified with $V$ through this inner product.
Let $\Omega^*$ be the dual cone of $\Omega$, that is,
$\Omega^*=\set{y\in V}{\innV{x}{y}>0\text{ for all }x\in\overline{\Omega}\setminus\{0\}}$.
Then, $H$ acts also on $\Omega^*$ linearly and simply transitively
via the contragredient representation so that
$\Omega^*$ is also a homogeneous cone.
With respect to $\Omega^*$,
we also have $H$-orbits $\dOe$ parametrized by $\ud\in\Ir$,
the basic relative invariants $\Delta_1^*(y),\dots,\Delta_r^*(y)$ $(y\in V)$,
the multiplier matrix $\sigma_*$,
and local zeta functions $\dlzf{f^*}{\ul{s}}$ $(f^*\in\rapid,\ \ul{s}\in\C^r)$.

Here, we assume that 
orders of the basic relative invariants of $\Omega$ and $\Omega^*$
are determined by the procedure of Ishi~\cite{Ishi2001},
as in the previous paper~\cite{N2020}.
Note that, 
in the case of symmetric cones,
this order of $\Omega^*$ is opposite of the standard one
(cf.\ Faraut and Kor{\'a}nyi~\cite{FK94}).
This makes notation simpler.


Let us equip $\Ir$ with an ordering inductively as follows.
If $r=1$, then we set $\ue_1=1$ and $\ue_2=-1$.
Suppose that the ordering in $\mathcal{I}_{r-1}$ is determined.
Let $\ue_i$ be the $i$-th element of $\mathcal{I}_{r-1}$.
Then,
$(1,\ue_i)$ is the $i$-th element of $\Ir$ and
$(-1,-\ue_i)$ is the ($2^{r-1}+i$)-th element of $\Ir$.
Using this order,
we shall write local zeta functions in a vector form as
\[\vlzf{f}{\ul{s}}=\bigl(\lzf{f}{\ul{s}}\bigr)_{\ue\in\Ir}\quad
\text{and}\quad
\vdlzf{f^*}{\ul{s}}=\bigl(\dlzf{f^*}{\ul{s}}\bigr)_{\ud\in\Ir}.\]

The Fourier transform $\Fourier$ of $f\in\rapid$ is defined as
\[
\Fourier(x):=\int_Vf(y)\exp(2\pi\sqrt{-1}\innV{x}{y})\,dy,
\]
where $dy$ is the Euclidean measure on $V$.
Let $\tau$ be the affine transformation on $\C^r$ defined by
$\tau(\s):=(\ul{d}-\s\sigma)\sigma_*^{-1}$,
where $\sigma_*$ is the multiplier matrix of $\Omega^*$.
For $\ul{\alpha}\in\C^r$,
we write 
$\Gamma(\ul{\alpha}):=\Gamma(\alpha_1)\cdots\Gamma(\alpha_r)$.
Then, the Gindikin gamma function $\Gamma_\Omega(\ul{\alpha})$
of $\Omega$ is defined as
\begin{equation}
\label{eq:defofgamma}
\Gamma_{\Omega}(\ul{\alpha})
=
(2\pi)^{(n-r)/2}\Gamma\Bigl(\ul{\alpha}-\frac{1}{2}\ul{p}\Bigr)
\end{equation}
(cf.\ Gindikin~\cite{Gindikin64}).
Moreover,
we set
\[
A_r(\ul{\alpha})=\biggl(
\exp\Bigl\{\frac{\pi\sqrt{-1}}{2}\Bigl(
\sum_{j=1}^r\varepsilon_j\delta_j\alpha_j
+\frac{1}{2}\sum_{1\le j<k\le r}\varepsilon_j\delta_kn_{kj}
\Bigr)\Bigr\}
\biggr)_{\ue,\ud\in\Ir},
\]
where the index $\ue$ runs vertically and $\ud$ horizontally.
Then,
Proposition 4.3 of the previous paper \cite{N2020} gives
the following functional equation of local zeta functions:
\begin{equation}
\label{eq:FE}
\vlzf{\Fourier[f^*]}{\ul{s}}
=
\frac{\Gamma_\Omega(\ul{s}\sigma)}{(2\pi)^{|\ul{s}\sigma|}}
A_r\Bigl(\ul{s}\sigma-\frac{1}{2}\ul{p}\Bigr)\vdlzf{f^*}{\tau(\ul{s})}.
\end{equation}
Here,
we set $|\ul{\alpha}|:=\alpha_1+\cdots+\alpha_r$ for $\ul{\alpha}\in\C^r$.
Note that a functional equation between zeta functions $\bs{\zeta}(L;\,\ul{s})$, $\bs{\zeta}^*(L^*;\,\ul{s})$ of 
prehomogeneous vector space $(H,V)$ can be described as
\begin{equation}
\label{eq:FEzeta}
\bs{\zeta}^*\bigl(L^*;\,\tau(\ul{s})\bigr)
=
v(L)
\frac{\Gamma_{\Omega}\bigl(\ul{s}\sigma-\frac12(\ul{q}-\ul{p})\bigr)}{(2\pi)^{|\ul{s}\sigma|}}
A_r\Bigl(\ul{s}\sigma-\frac12\ul{q}\Bigr)
\bs{\zeta}\bigl(L;\,\ul{s})\quad(\ul{s}\in D),
\end{equation}
where $L$, $L^*$ are a lattice and its dual lattice, respectively,
and $D$ is a suitable complex domain
(see Theorem 4.7 in \cite{N2020} for more detail).
Since the case $r=1$ corresponds to the classical case of the Riemann zeta function,
we exclude this case so that we assume that $r\ge 2$ throughout this paper.

Set $\Digit=\{0,1\}^r$.
Let $\mathrm{sgn}$ be the sign function of $\R^{\times}=\R\setminus\{0\}$,
that is,
$\mathrm{sgn}(x)=x/|x|$, and we set $\mathrm{sgn}(0):=0$.
Let $\omega^{s,a}$ $(s\in\C,\ a=0,1)$ be the quasi-character of $\R^{\times}$
defined by $\omega^{s,a}(x):=\mathrm{sgn}(x)^a|x|^s$.
Using this, 
we introduce zeta distributions $\localZ{f}$ of $\Omega$ and $\dlocalZ{f^*}$ of $\Omega^*$,
parametrized by $\ul{a},\ul{b}\in\Digit$, 
by
\begin{equation}
\label{def:localZ}
\localZ{f}
:=
\int_V \prod_{j=1}^r\omega^{s_j,a_j}\bigl(\Delta_j(x)\bigr)\,f(x)d\mu(x)
\quad
(f\in\rapid,\ \ul{s}\in\C^r)
\end{equation}
and
\[
\dlocalZ{f^*}
:=
\int_V \prod_{j=1}^r\omega^{s_j,b_j}\bigl(\Delta^*_j(y)\bigr)\,f^*(y)d\mu^*(y)
\quad
(f^*\in\rapid,\ \ul{s}\in\C^r),
\]
where $d\mu^*$ is a suitable $H$ invariant measure associated with the dual prehomogeneous vector space.
We shall rewrite the functional equation~\eqref{eq:FE} by using zeta distributions in Section~\ref{sect:proof_main}.
We note that,
for symmetric cones viewed as homogeneous spaces of reductive groups,
zeta distributions are studied in \cite{BCK2018}.

Put
$\kvsub{\ue}:=\prod_{j=1}^r\varepsilon_j^{a_j}$ $(\ue\in\Ir,\ \ul{a}\in\mathbb{Z}^r)$.
The following lemma tells us that analytic properties of $\localZ{f}$ and $\dlocalZ{f^*}$ are the same as $\lzf{f}{\s}$ and $\dlzf{f^*}{\ul{s}}$, respectively.

\begin{lemma}
\label{lemma:transform}
{\rm (1)} For each $\ul{a}\in\Digit$,
one has
$\ds\localZ{f}=\sum_{\ue\in\Ir}
\kvsup{\ul{a}\sigma}
\lzf{f}{\ul{s}}.
$\\
{\rm (2)} For each $\ul{b}\in\Digit$,
one has
$\ds
\dlocalZ{f^*}=\sum_{\ud\in\Ir}\kvsub[\ul{b}\sigma_*]{\ud}\dlzf{f^*}{\ul{s}}.
$
%
\end{lemma}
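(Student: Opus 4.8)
The plan is to prove both identities by expanding the defining integral \eqref{def:localZ} as a sum over the open orbits $\Oe$ and tracking how the quasi-character $\omega^{s_j,a_j}$ interacts with the relative invariants $\Delta_j$ on each orbit. First I would note that $V\setminus\bigcup_{\ue}\Oe$ is the zero set of $\Delta_1\cdots\Delta_r$, hence of $d\mu$-measure zero, so that
\[
\localZ{f}=\sum_{\ue\in\Ir}\int_{\Oe}\prod_{j=1}^r\omega^{s_j,a_j}\bigl(\Delta_j(x)\bigr)\,f(x)\,d\mu(x).
\]
On the orbit $\Oe$ the signs of the basic relative invariants are constant; the key point is to identify exactly the sign vector attached to $\Oe$. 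Using the normalization \eqref{eq:multiplier matrix}, if $x$ lies over the point $x_1c_1+\cdots+x_rc_r$ with $\mathrm{sgn}(x_i)=\varepsilon_i$, then $\mathrm{sgn}(\Delta_j(x))=\prod_{i=1}^r\varepsilon_i^{\sigma_{ji}}$. Writing this in terms of $\kvsub{\ue}=\prod_j\varepsilon_j^{a_j}$, we get $\mathrm{sgn}(\Delta_j(x))=\kvsub[\sigma_j]{\ue}$ where $\sigma_j$ is the $j$-th row of $\sigma$; more precisely this is exactly the orbit-labelling that was fixed when the ordering on $\Ir$ was set up, so that $\Oe$ is by definition the orbit on which $\bigl(\mathrm{sgn}(\Delta_1(x)),\dots,\mathrm{sgn}(\Delta_r(x))\bigr)$ takes the value determined by $\ue\sigma$ (read componentwise through $\mathrm{sgn}$).

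Granting that identification, on $\Oe$ one has $\omega^{s_j,a_j}(\Delta_j(x))=\mathrm{sgn}(\Delta_j(x))^{a_j}|\Delta_j(x)|^{s_j}$, and multiplying over $j$ the sign contributions assemble into
\[
\prod_{j=1}^r\mathrm{sgn}(\Delta_j(x))^{a_j}
=\prod_{j=1}^r\Bigl(\prod_{i=1}^r\varepsilon_i^{\sigma_{ji}}\Bigr)^{a_j}
=\prod_{i=1}^r\varepsilon_i^{\sum_j a_j\sigma_{ji}}
=\prod_{i=1}^r\varepsilon_i^{(\ul{a}\sigma)_i}
=\kvsup{\ul{a}\sigma},
\]
which is a constant depending only on $\ue$ and $\ul{a}$. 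Pulling this constant out of the orbit integral leaves exactly $\int_{\Oe}|\Delta_1(x)|^{s_1}\cdots|\Delta_r(x)|^{s_r}f(x)\,d\mu(x)=\lzf{f}{\ul{s}}$ by \eqref{eq:lzf}, and summing over $\ue$ gives statement (1). Statement (2) is identical with $\Omega$, $\Delta_j$, $\sigma$, $d\mu$ replaced by $\Omega^*$, $\Delta_j^*$, $\sigma_*$, $d\mu^*$, and with $\ul{a}$ replaced by $\ul{b}$. The assertion about analytic properties then follows because each identity expresses $\localZ{f}$ (resp.\ $\dlocalZ{f^*}$) as a finite $\mathbb{C}$-linear combination of the $\lzf{f}{\ul{s}}$ (resp.\ $\dlzf{f^*}{\ul{s}}$), which are known to converge for $\mathrm{Re}\,\ul{s}>\ul{d}\sigma^{-1}$ and to continue meromorphically to all of $\C^r$.

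The one genuine obstacle is the bookkeeping in the middle step: making precise that the orbit $\Oe$ indexed by $\ue$ in the fixed ordering is the one on which the sign vector of $(\Delta_1,\dots,\Delta_r)$ equals $\mathrm{sgn}(\ue\sigma)$, rather than $\ue$ itself. This is a matter of unwinding the conventions from \cite{N2020} (and the ordering introduced just above the statement): the orbits are labelled by the signs $\varepsilon_i=\mathrm{sgn}(x_i)$ of the coordinates in the Jordan-like frame $\{c_j\}$, and the relation between those and the signs of the $\Delta_j$ is governed precisely by the exponent matrix $\sigma$ via \eqref{eq:multiplier matrix}. Once that correspondence is stated cleanly, everything else is the linear-algebra identity $\prod_i\varepsilon_i^{\sum_j a_j\sigma_{ji}}=\kvsup{\ul{a}\sigma}$ together with the observation that the exceptional set has measure zero, both of which are routine.
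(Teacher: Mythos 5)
Your proof is correct and follows essentially the same route as the paper's: decompose the integral over $V$ into integrals over the open orbits $\Oe$ (the complement being of measure zero), use \eqref{eq:multiplier matrix} and the relative invariance of $\Delta_j$ to determine the sign of $\Delta_j$ on each orbit from the value at the representative $\ce=\varepsilon_1 c_1+\cdots+\varepsilon_r c_r$, and assemble the sign factors into $\kvsup{\ul{a}\sigma}$. The only substantive difference is that you spell out the measure-zero step explicitly, while the paper leaves it implicit; one small imprecision in your phrasing is the suggestion that $\Oe$ is \emph{defined} by its sign vector $\mathrm{sgn}(\ue\sigma)$, whereas it is defined as the $H$-orbit through $\ce$ and the sign vector is then derived — but your subsequent computation uses exactly the correct derivation.
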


\begin{proof}
We shall consider only the assertion (1) because we can prove the assertion (2) in the same line.
Set $\ce:=\varepsilon_1c_1+\cdots+\varepsilon_rc_r\in V$ where $c_j$ is a suitable basis of $V_{jj}$ $(j=1,\dots,r)$,
and let $\ul{e}_j\in\R^r$ be the row unit vector having one on the $j$-th position and zeros elsewhere.

Recalling the property~\eqref{eq:multiplier matrix} of the multiplier matrix $\sigma=(\sigma_{jk})_{1\le j,k\le r}$,
we have
\[
\Delta_j(\ce)=\varepsilon_1^{\sigma_{j1}}\cdots\varepsilon_r^{\sigma_{jr}}=\kvsup{\ul{e}_j\sigma}.
\]
Since $\Oe$ is the orbit of $H$ through $\ce\in V$
and since $\Delta_j$ is a relatively invariant function,
we have for $x\in\Oe$
\[
\omega^{s_j,a_j}\bigl(\Delta_j(x)\bigr)
=
\bigl(\varepsilon_1^{\sigma_{j1}}\cdots\varepsilon_r^{\sigma_{jr}}\bigr)^{a_j}
|\Delta_j(x)|^{s_j}
=\kvsup{a_j\,\ul{e}_j\sigma}|\Delta_j(x)|^{s_j},
\]
and thus
\[
\prod_{j=1}^r\omega^{s_j,a_j}\bigl(\Delta_j(x)\bigr)
=
\kvsup{\ul{a}\sigma}\,|\Delta_1(x)|^{s_1}\cdots|\Delta_r(x)|^{s_r}.
\]
By~\eqref{eq:lzf} and~\eqref{def:localZ}, 
we obtain the assertion (1).
\end{proof}

Let $J^{(r)}$ be an Hadamard matrix defined as an orthogonal symmetric matrix of size $2^r$ by
\[
J^{(1)}:=\frac{1}{\sqrt{2}}\pmat{1&1\\1&-1},\quad
J^{(r)}:=J^{(r-1)}\otimes J^{(1)}
=
\frac{1}{\sqrt{2}}\pmat{J^{(r-1)}&J^{(r-1)}\\ J^{(r-1)}&-J^{(r-1)}}.
\]
If we introduce a column vector $\kv$ $(\ul{a}\in\Digit)$ by
$\kv:=\bigl(\kvsup{\ul{a}}\bigr)_{\ue\in\Ir}$,
then we have $\set{\kv}{\ul{a}\in\Digit}=\mathcal{I}_2^{\otimes r}$
so that there exists a suitable ordering of $\Digit$ such that
a matrix $2^{-r/2}\bigl(\kv\bigr)_{\ul{a}\in\Digit}$ of size $2^r$ obtained by 
arraying column vectors $2^{-r/2}\kv$ in a row
satisfies 
\[
J^{(r)}=2^{-r/2}\bigl(\kv\bigr)_{\ul{a}\in\Digit}.
\]
We fix this order of $\Digit$ throughout the paper.
Then, we write zeta distributions in a vector form as 
\[
\bs{Z}(f;\,\ul{s}):=\bigl(\localZ{f}\bigr)_{\ul{a}\in\Digit},\quad
\bs{Z}^*(f^*;\,\ul{s}):=\bigl(\dlocalZ{f^*}\bigr)_{\ul{b}\in\Digit}.
\]

%
%
%
%

Since $\sigma$ is a unimodular matrix,
the map
$\Digit\ni\ul{a}\mapsto\ul{a}\sigma\ (\mathrm{mod}\ 2)\in\Digit$
is a bijection and hence
it induces a permutation in $\Digit$.
Thus, the matrix $J_\sigma:=2^{-r/2}\bigl(\kvsub[\ul{a}\sigma]{\ue}\bigr)_{\ul{a}\in\Digit}$
is described by using a permutation matrix $W_\sigma$ as $J_\sigma=J^{(r)}W_\sigma$.
Then, the equations in Lemma~\ref{lemma:transform} are written in a vector form as 
\[
\bs{Z}(f;\,\ul{s})={}^tJ_\sigma\vlzf{f}{\ul{s}}={}^tW_\sigma J^{(r)}\vlzf{f}{\ul{s}}.
\]
Similarly, 
with respect to $\Omega^*$,
we have a permutation matrix $W_{\sigma_*}$ such that
$J_{\sigma_*}=J^{(r)}W_{\sigma_*}$.
Therefore, we obtain a relationship between local zeta functions and zeta distributions in a vector form as
\begin{equation}
\label{eq:relation}
J^{(r)}\vlzf{f}{\ul{s}}=W_\sigma\bs{Z}(f;\,\ul{s}),\quad
J^{(r)}\vdlzf{f^*}{\ul{s}}=W_{\sigma_*}\bs{Z}^*(f^*;\,\ul{s}).
\end{equation}

%
%
%

\begin{remark}
In general,
matrices $W_\sigma$ and $W_{\sigma_*}$ induce different permutations.
For example, the Vinberg cone of rank 3, which is the non-symmetric homogeneous cone of the least dimension,
has structure constants $(n_{21},n_{31},n_{32})=(1,1,0)$.
The corresponding multiplier matrices $\sigma$ and $\sigma_*$ are given respectively as
\[
\sigma=\pmat{1&0&0\\1&1&0\\1&0&1},\quad
\sigma_*=\pmat{1&1&1\\0&1&0\\0&0&1},
\]
which make $W_\sigma$ and $W_{\sigma_*}$ different permutations.
\end{remark}

\section{Decomposition of gamma matrices}
\label{sect:proof_main}

All notations are followed by the previous section.
Note that we now assume that $r\ge 2$.
Set $\ul{\alpha}=\ul{s}\sigma-\frac{1}{2}\ul{p}$.
Then, the gamma matrix, that is
the coefficient matrix in \eqref{eq:FE} can be written, by~\eqref{eq:defofgamma}, as
\[
\frac{\Gamma_\Omega(\ul{s}\sigma)}{(2\pi)^{|\ul{s}\sigma|}}
A_r\Bigl(\ul{s}\sigma-\frac{1}{2}\ul{p}\Bigr)
=
\frac{\Gamma(\ul{s}\sigma-\frac{1}{2}\ul{p})}{(2\pi)^{|\ul{s}\sigma-\frac{1}{2}\ul{p}|}}
A_r\Bigl(\ul{s}\sigma-\frac{1}{2}\ul{p}\Bigr)
=
\frac{\Gamma(\ul{\alpha})}{(2\pi)^{|\ul{\alpha}|}}A_r(\ul{\alpha}).
\]
In the first equality,
we use $|\ul{p}|=n-r$.
According to Lemma~\ref{lemma:transform} together with the above observation,
in order to rewrite the functional equation~\eqref{eq:FEzeta} by using zeta distributions,
it is enough to consider the conjugate of $A_r(\ul{\alpha})$ with respect to $J^{(r)}$.
This means that
we can give a proof apart from the structure of homogeneous cones,
and hence
we replace half integers $\frac{n_{kj}}{2}$ by real numbers $\theta_{kj}$.
Let us put
\[
\ctt(\alpha):=\cos\frac{\pi \alpha}{2},\quad
\stt(\alpha):=\sin\frac{\pi\alpha}{2}\quad
(\alpha\in\C).
\]
By setting $\Theta_r:=(\theta_{kj})_{1\le j<k\le r}$ and $\dIr=\set{\ue\in\Ir}{\varepsilon_1=1}$,
we put
\[
C_r(\ul{\alpha};\,\Theta_r)=\left(\ctt\Bigl(\sum_{i=1}^r\varepsilon_i\delta_i\alpha_i+\sum_{1\le j<k\le r}\varepsilon_j\delta_k\theta_{kj}\Bigr)\right)_{\ue,\ud\in\dIr}
\]
and
\[
S_r(\ul{\alpha};\,\Theta_r)=\left(\stt\Bigl(\sum_{i=1}^r\varepsilon_i\delta_i\alpha_i+\sum_{1\le j<k\le r}\varepsilon_j\delta_k\theta_{kj}\Bigr)\right)_{\ue,\ud\in\dIr}.
\]
If we set $\ul{e}_1=(1,0,\dots,0)$,
then we have $S_r(\ul{\alpha};\,\Theta_r)=C_r(\ul{\alpha}-\ul{e}_1;\,\Theta_r)$
because $\varepsilon_1\delta_1=1$ and $\stt(a)=\ctt(a-1)$.
Since $J^{(r)}$ can be decomposed into 
\[
J^{(r)}=J^{(r-1)}\otimes J^{(1)}
=
\frac{1}{\sqrt{2}}
\pmat{J^{(r-1)}&0\\0&J^{(r-1)}}
\pmat{I_{2^{r-1}}&I_{2^{r-1}}\\I_{2^{r-1}}&-I_{2^{r-1}}},
\]
we have by the definition of ordering of $\Ir$
\begin{equation}
\label{eq:CS}
J^{(r)}
A_r(\ul{\alpha})
J^{(r)}
=
2
\smat{J^{(r-1)}C_r(\ul{\alpha};\,\Theta_r)J^{(r-1)}&0\\0&\sqrt{-1}\,J^{(r-1)}S_r(\ul{\alpha};\,\Theta_r)J^{(r-1)}}.
\end{equation}
Hence,
it is enough to consider the conjugate of $C_r(\ul{\alpha};\,\Theta_r)$ by $J^{(r-1)}$
instead of that of $A_r(\ul{\alpha})$ by $J^{(r)}$.
In order to state the formula,
we introduce two series $D_j^{(r)}(\alpha)$ $(j=1,\dots,r)$ and $P^{(r)}_{kj}(\theta)$ $(1\le j<k\le r)$ of matrices of size $2^{r-1}$ inductively as follows.
$D_j^{(r)}(\alpha)$  are diagonal matrices defined as
\[
\begin{array}{c}
\ds
D_1^{(2)}(\alpha)=\pmat{\ctt(\alpha)&0\\0&\stt(\alpha)},
\quad
D_2^{(2)}(\alpha)=\pmat{\ctt(\alpha)&0\\0&-\stt(\alpha)}\\
\ds
D_1^{(r)}(\alpha):=\pmat{\ctt(\alpha)I_{2^{r-2}}&0\\0&\stt(\alpha)I_{2^{r-2}}},
\quad
D_2^{(r)}(\alpha):=\pmat{D_1^{(r-1)}(\alpha)&0\\0&D_1^{(r-1)}(\alpha+1)},\\
\ds
D_k^{(r)}(\alpha):=D_{k-1}^{(r-1)}(\alpha)\otimes I_2
=
\pmat{D_{k-1}^{(r-1)}(\alpha)&0\\0&D_{k-1}^{(r-1)}(\alpha)}
\quad
(k=3,\dots,r).
\end{array}
\]
%
Setting $R_2=\pmat{0&-1\\1&0}$ and $A_2=\pmat{0&1\\1&0}$,
we put 
\[
X_{kj}^{(r)}=
\underbrace{
	\overbrace{I_2\otimes\cdots\otimes I_2}^{r-k-j+1}
	\otimes
	\overbrace{A_2\otimes\cdots\otimes A_2}^{k-2}
	\otimes R_2 \otimes
	\overbrace{I_2\otimes\cdots\otimes I_2}^{j-1}
}_{r-1}.
\]
Then, $P_{kj}^{(r)}(\theta)$ are defined as orthogonal matrices by
\begin{equation}\label{def:Pkj}
P_{kj}^{(r)}(\theta)=\exp\bigl(\theta X_{kj}^{(r)}\bigr)\quad(\theta\in\R).
\end{equation}
Note that, for any fixed $j$ and $r$, the matrices $\widetilde{P}_{kj}^{(r)}(\theta)$ $(k>j)$ are commutative.
For non-commutative variables $a_1,\dots,a_r$, we set
\[
\revProd a_j:=a_r\times a_{r-1}\times \cdots\times a_1.
\]

\begin{proposition}
\label{prop:conjugate C}
The conjugate of $C_r(\ul{\alpha};\,\Theta_r)$ by $J^{(r-1)}$ is given as
\begin{equation}
\label{formula:proposition}
J^{(r-1)}C_r(\ul{\alpha};\,\Theta_r)J^{(r-1)}
=
2^{r-1}
\revProd[j=1]\Bigl(\prod_{k=j+1}^r P^{(r)}_{kj}(\theta_{kj})\cdot D^{(r)}_j(\alpha_j)\Bigr).
\end{equation}
\end{proposition}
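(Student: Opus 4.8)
The plan is to prove the identity by induction on $r\ge 2$, at each step peeling off the variable $\alpha_1$ together with the first column $(\theta_{k1})_{2\le k\le r}$ of $\Theta_r$; these are exactly what will produce the rightmost factor $\prod_{k=2}^r P^{(r)}_{k1}(\theta_{k1})\cdot D^{(r)}_1(\alpha_1)$ on the right-hand side. The base case $r=2$ is a direct computation: the four entries of $C_2(\ul\alpha;\Theta_2)$ are $\ctt(\alpha_1\pm\alpha_2\pm\theta_{21})$, the signs dictated by $\varepsilon_2\delta_2$ and by $\delta_2$, and two rounds of the product-to-sum identities for $\ctt$ and $\stt$ — one for the row combinations induced by $J^{(1)}$ on the left, one for the columns on the right — collapse $J^{(1)}C_2(\ul\alpha;\Theta_2)J^{(1)}$ to $2\,D^{(2)}_2(\alpha_2)P^{(2)}_{21}(\theta_{21})D^{(2)}_1(\alpha_1)$.

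For the inductive step I would first identify $\dIr$ with $\Ir[r-1]$ as ordered sets via $(1,\ue')\leftrightarrow\ue'$ (no sign twist, since $\varepsilon_1=1$), so that $J^{(r-1)}$ is the Hadamard matrix of $\Ir[r-1]$. Because $\varepsilon_1=\delta_1=1$ on $\dIr$, the argument of $\ctt$ in the $(\ue,\ud)$ entry of $C_r(\ul\alpha;\Theta_r)$ splits as $a(\ud')+b(\ue',\ud')$, where $a(\ud')=\alpha_1+\sum_{k=2}^r\delta_k\theta_{k1}$ depends only on the column index, while $b(\ue',\ud')=\sum_{i=2}^r\varepsilon_i\delta_i\alpha_i+\sum_{2\le j<k\le r}\varepsilon_j\delta_k\theta_{kj}$ is such that $\exp\bigl(\tfrac{\pi\sqrt{-1}}{2}b(\ue',\ud')\bigr)$ is, after a harmless rescaling of the constants, the $(\ue',\ud')$-entry of $A_{r-1}$ for the variables $\alpha_2,\dots,\alpha_r$. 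The addition formula $\ctt(a+b)=\ctt(a)\ctt(b)-\stt(a)\stt(b)$ then writes $C_r(\ul\alpha;\Theta_r)=\mathcal{B}^c\Lambda^c-\mathcal{B}^s\Lambda^s$, with $\mathcal{B}^c,\mathcal{B}^s$ the matrices with entries $\ctt(b(\ue',\ud'))$, $\stt(b(\ue',\ud'))$ and $\Lambda^c,\Lambda^s$ the diagonal matrices with entries $\ctt(a(\ud'))$, $\stt(a(\ud'))$. Writing $\mathcal{B}^c=\tfrac12(A^{+}+A^{-})$, $\mathcal{B}^s=\tfrac1{2\sqrt{-1}}(A^{+}-A^{-})$ with $A^{\pm}_{\ue',\ud'}=\exp\bigl(\pm\tfrac{\pi\sqrt{-1}}{2}b(\ue',\ud')\bigr)$, and using that $b$ is odd in $\ue'$ so that $A^{-}$ is $A^{+}$ with its rows permuted by the negation $\ue'\mapsto-\ue'$, i.e.\ by the block swap $\smat{0&I\\I&0}$ on $\Ir[r-1]$ — which $J^{(r-1)}=J^{(1)}\otimes J^{(r-2)}$ conjugates to $\smat{I&0\\0&-I}$ because $J^{(1)}\smat{0&1\\1&0}J^{(1)}=\smat{1&0\\0&-1}$ — together with \eqref{eq:CS} in rank $r-1$ (which makes $J^{(r-1)}A^{+}J^{(r-1)}$ block diagonal with blocks $2J^{(r-2)}C_{r-1}J^{(r-2)}$ and $2\sqrt{-1}\,J^{(r-2)}S_{r-1}J^{(r-2)}$), one finds that the $J^{(r-1)}$-conjugates of $\mathcal{B}^c$ and $\mathcal{B}^s$ are supported in complementary diagonal blocks, namely $\smat{2J^{(r-2)}C_{r-1}J^{(r-2)}&0\\0&0}$ and $\smat{0&0\\0&2J^{(r-2)}S_{r-1}J^{(r-2)}}$.

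Conjugating the relation $C_r=\mathcal{B}^c\Lambda^c-\mathcal{B}^s\Lambda^s$ by $J^{(r-1)}$ (inserting $(J^{(r-1)})^2=I$ between the factors) and using this block support, the contribution of $\mathcal{B}^c,\mathcal{B}^s$ collects into the single block-diagonal matrix $\smat{2J^{(r-2)}C_{r-1}J^{(r-2)}&0\\0&-2J^{(r-2)}S_{r-1}J^{(r-2)}}$ on the left; by the inductive hypothesis applied to $C_{r-1}$ and to $S_{r-1}=C_{r-1}(\,\cdot-\ul{e}_1)$, together with the identity $D^{(r-1)}_1(\beta-1)=-D^{(r-1)}_1(\beta+1)$ and the defining recursions $D^{(r)}_k=D^{(r-1)}_{k-1}\otimes I_2$, $D^{(r)}_2(\beta)=\smat{D^{(r-1)}_1(\beta)&0\\0&D^{(r-1)}_1(\beta+1)}$, and $X^{(r)}_{k+1,j+1}=X^{(r-1)}_{kj}\otimes I_2$, this matrix equals $2^{r-1}\revProd[j=2]\Bigl(\prod_{k=j+1}^r P^{(r)}_{kj}(\theta_{kj})\cdot D^{(r)}_j(\alpha_j)\Bigr)$, i.e.\ the $j\ge 2$ part of the target product. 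What is left is the matrix obtained by stacking the first block-rows of $J^{(r-1)}\Lambda^c J^{(r-1)}$ over the second block-rows of $J^{(r-1)}\Lambda^s J^{(r-1)}$; since $a(\ud')$ involves only $\alpha_1$ and the $\theta_{k1}$, expanding $\ctt(a(\ud'))$ and $\stt(a(\ud'))$ one variable $\theta_{k1}$ at a time by the addition formula — each step contributing a scalar $\ctt(\theta_{k1})$ or $\stt(\theta_{k1})$ and, on the $\stt$-branch, a sign-diagonal $\mathrm{diag}(\delta_k)$ whose $J^{(r-1)}$-conjugate is precisely the transposition exponentiated by $X^{(r)}_{k1}$ — assembles this remaining matrix into $\prod_{k=2}^r P^{(r)}_{k1}(\theta_{k1})\cdot D^{(r)}_1(\alpha_1)$, the residual dependence on $\alpha_1$ (together with the $\ctt$-versus-$\stt$ difference between the two blocks) producing $D^{(r)}_1(\alpha_1)$. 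Multiplying the two factors and keeping the constant $2^{r-1}=2\cdot 2^{r-2}$ gives the claimed formula.

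The analytic input is light — only the scalar sum-and-product identities for $\ctt$ and $\stt$, the facts that $J^{(r)}$ is a symmetric involution built from tensor powers of $J^{(1)}$, and the already-proved \eqref{eq:CS}. I expect the real work to be the combinatorial bookkeeping: keeping straight the sign flip $(-1,-\ue_i)$ in the ordering of $\Ir$, the tensor factorization of $J^{(r-1)}$ (in the paper's convention, where ``$\,\cdot\otimes I_2$'' denotes block-diagonal repetition), and the recursive — and, for $D^{(r)}_2$, non-tensorial — definitions of $D^{(r)}_j$ and $P^{(r)}_{kj}$, so that the half coming from $b$ reproduces exactly the $j\ge 2$ factors and the half coming from $a$ exactly the $j=1$ factor, in the correct order and with the correct scalars.
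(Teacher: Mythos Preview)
Your approach is correct and genuinely different from the paper's. The paper also argues by induction on $r$, but its inductive step decomposes $C_r$ into the $2\times 2$ block matrix $\smat{C_{11}&C_{12}\\C_{21}&C_{22}}$ determined by the values of $(\varepsilon_2,\delta_2)$; each block is identified as a full $C_{r-1}$ whose \emph{first} variable is a signed combination $\pm(\alpha_1\pm\alpha_2\pm\theta_{21})$ and whose $\theta'_{k1}$ is $\pm\theta_{k+1,1}+\theta_{k+1,2}$. Applying the inductive hypothesis to all four blocks produces a common factor $T$ (carrying all of $\alpha_3,\dots,\alpha_r$ and $\theta_{kj}$ with $j\ge 2$), and three separate lemmas then disentangle the remaining pieces: one identifies $\widetilde K\smat{D_{11}&D_{12}\\D_{21}&D_{22}}\widetilde K$ with $2D^{(r)}_2(\alpha_2)P^{(r)}_{21}(\theta_{21})D^{(r)}_1(\alpha_1)$, a second splits the $P_{\pm}$ matrices into $\theta_{k2}$- and $\theta_{k1}$-parts, and a third commutes the $\theta_{k1}$-parts past $D^{(r)}_2(\alpha_2)$. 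Your additive split $a(\ud')+b(\ue',\ud')$ isolates the $j=1$ data in one stroke: the $b$-part is literally an $A_{r-1}$-matrix in the shifted variables, so \eqref{eq:CS} in rank $r-1$ plus the inductive hypothesis give the entire $j\ge 2$ portion of the product directly, with no analogue of the paper's three lemmas; what buys this economy is that you must then establish a separate ``$N$-recursion'' for the $j=1$ factor. Both routes are of comparable length, but yours makes the separation of variables more transparent.

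One point in your sketch is imprecise and worth tightening. You write that $J^{(r-1)}\mathrm{diag}(\delta_k)J^{(r-1)}$ is ``the transposition exponentiated by $X^{(r)}_{k1}$''. It is not: that conjugate is the \emph{symmetric} block-swap $Y_k\otimes A_2$ (in the paper's tensor convention) with $Y_k=I_2^{\otimes(r-k)}\otimes A_2^{\otimes(k-2)}$, whereas $X^{(r)}_{k1}=Y_k\otimes R_2$ is skew. The sign that turns $A_2$ into $R_2$ comes from the $E_1/E_2$ selection implicit in your definition of $N$ (top block-row of $M^c$, bottom block-row of $M^s$, combined with the minus sign on the $\stt$-branch): writing $N=E_1M^c+E_2M^s$ and feeding in the addition-formula recursion for $\Lambda^{c/s}$ gives exactly
\[
N\bigl(a+\delta_k\theta_{k1}\bigr)=\bigl(\ctt(\theta_{k1})I+\stt(\theta_{k1})X^{(r)}_{k1}\bigr)\,N(a)=P^{(r)}_{k1}(\theta_{k1})\,N(a),
\]
and iterating down to $a=\alpha_1$ (where $N(\alpha_1)=D^{(r)}_1(\alpha_1)$ trivially) yields $N=\prod_{k=2}^r P^{(r)}_{k1}(\theta_{k1})\cdot D^{(r)}_1(\alpha_1)$, as you claim. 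With this clarification the argument is complete; the commutativity of the $P^{(r)}_{k1}$ for varying $k$ makes the order of peeling irrelevant.
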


\noindent\textit{Proof.}
We shall prove this proposition by induction on rank $r$.
First, if $r=2$,
then a simple calculation yields that
\[
J^{(1)}C_2(\ul{\alpha};\,\Theta_2)J^{(1)}
=
2
\pmat{\ctt(\alpha_2)&0\\0&-\stt(\alpha_2)}
P_{21}^{(2)}(\theta_{21})
\pmat{\ctt(\alpha_1)&0\\0&\stt(\alpha_1)}
\]
and hence the theorem holds in the case $r=2$.
Suppose that $r\ge 3$ and the theorem holds for the case $r-1$.
Let us decompose $C_r(\ul{\alpha};\,\Theta_r)$ into four matrices $C_{11},C_{12},C_{21},C_{22}$ of size $2^{r-2}$ as
\[
C_r(\ul{\alpha};\,\Theta_r)=\pmat{C_{11}&C_{12}\\C_{21}&C_{22}}.
\]
For $i,j\le 2^{r-2}$,
suppose that $i$-th entry of $\Ir$ is $(1,1,\ue')$ and $j$-th $(1,1,\ud')$.
Then, since we now take the order of $\Ir$  as in Section~\ref{sect:preliminary},
the positions of the $(i,j)$ entries of matrices $C_{kl}$ $(k,l\in\{1,2\})$ 
as entries of $C_r(\ul{\alpha};\,\Theta_r)$ are given as
\[
\begin{array}{ll}
C_{11}\colon \ue_{11}(i)=(1,1,\ue'),&
\ud_{11}(j)=(1,1,\ud'),\\
C_{21}\colon \ue_{21}(i)=(1,-1,-\ue'),&
\ud_{21}(j)=(1,1,\ud'),\\[1ex]
C_{12}\colon \ue_{12}(i)=(1,1,\ue'),&
\ud_{12}(j)=(1,-1,-\ud'),\\[1ex]
C_{22}\colon \ue_{22}(i)=(1,-1,-\ue'),&
\ud_{22}(j)=(1,-1,-\ud').
\end{array}
\]
Therefore,
each $(i,j)$ entry of $C_{kl}$ is described as
\[
\begin{array}{ll}
C_{11}:&\ds\ctt\Bigl(\alpha_1+\alpha_2+\theta_{21}+\sum_{j=3}^r\varepsilon_j\delta_j\alpha_j+\sum_{k=3}^r\delta_k(\theta_{k1}+\theta_{k2})+\sum_{3\le j<k\le r}\varepsilon_j\delta_k\theta_{kj}\Bigr)\\[4ex]
C_{21}:&\ds\ctt\Bigl(\alpha_1-\alpha_2+\theta_{21}-\sum_{j=3}^r\varepsilon_j\delta_j\alpha_j+\sum_{k=3}^r\delta_k(\theta_{k1}-\theta_{k2})-\sum_{3\le j<k\le r}\varepsilon_j\delta_k\theta_{kj}\Bigr)\\
&\ds\quad=\ctt\Bigl(-(\alpha_1-\alpha_2+\theta_{21})+\sum_{j=3}^r\varepsilon_j\delta_j\alpha_j+\sum_{k=3}^r\delta_k(-\theta_{k1}+\theta_{k2})+\sum_{3\le j<k\le r}\varepsilon_j\delta_k\theta_{kj}\Bigr)\\[4ex]
C_{12}:&\ds\ctt\Bigl(\alpha_1-\alpha_2-\theta_{21}-\sum_{j=3}^r\varepsilon_j\delta_j\alpha_j-\sum_{k=3}^r\delta_k(\theta_{k1}+\theta_{k2})-\sum_{3\le j<k\le r}\varepsilon_j\delta_k\theta_{kj}\Bigr)\\
&\ds\quad=\ctt\Bigl(-(\alpha_1-\alpha_2-\theta_{21})+\sum_{j=3}^r\varepsilon_j\delta_j\alpha_j+\sum_{k=3}^r\delta_k(\theta_{k1}+\theta_{k2})+\sum_{3\le j<k\le r}\varepsilon_j\delta_k\theta_{kj}\Bigr)\\[4ex]
C_{22}:&\ds\ctt\Bigl(\alpha_1+\alpha_2-\theta_{21}+\sum_{j=3}^r\varepsilon_j\delta_j\alpha_j+\sum_{k=3}^r\delta_k(-\theta_{k1}+\theta_{k2})+\sum_{3\le j<k\le r}\varepsilon_j\delta_k\theta_{kj}\Bigr).
\end{array}
\]
Let us set
\[
\alpha'(a,b):=(a(\alpha_1+a\alpha_2+b\theta_{21}),\alpha_3,\dots,\alpha_r)\in\C^{r-1}
\quad(a,b=+\text{ or }-)
\]
and
\[
\Theta'(\pm):=(\theta'_{kj})_{1\le j<k\le r-1},\quad
\begin{cases}
\theta_{k1}'=\pm\theta_{k+1,1}+\theta_{k+1,2}&(k\ge 2),\\[1ex]
\theta'_{kj}=\theta_{k+1,j+1}&(k>j\ge 2).
\end{cases}
\]
Then, we obtain
\[
\begin{array}{cc}
C_{11}=C_{r-1}(\alpha'(+,+);\,\Theta'(+)),&
C_{12}=C_{r-1}(\alpha'(-,-);\,\Theta'(+)),\\[1ex]
C_{21}=C_{r-1}(\alpha'(-,+);\,\Theta'(-)),&
C_{22}=C_{r-1}(\alpha'(+,-);\,\Theta'(-)).
\end{array}
\]
Put $J=J^{(r-2)}$.
Then, the hypothesis of induction implies that
the conjugate of $C_{kl}$ $(k,l\in\{1,2\})$ with respect to $J$ are given as
\[
\begin{array}{l}
\ds
JC_{11}J=
2^{r-2}\cdot T\cdot\prod_{k=3}^{r}P^{(r-1)}_{k-1,1}(\theta_{k1}+\theta_{k2})\times 
D^{(r-1)}_1(\alpha_1+\alpha_2+\theta_{21})\\[4ex]
\ds
JC_{21}J=
2^{r-2}\cdot T\cdot\prod_{k=3}^{r}P^{(r-1)}_{k-1,1}(-\theta_{k1}+\theta_{k2})\times 
D^{(r-1)}_1(-(\alpha_1-\alpha_2+\theta_{21}))\\[4ex]
\ds
JC_{12}J=
2^{r-2}\cdot T\cdot\prod_{k=3}^{r}P^{(r-1)}_{k-1,1}(\theta_{k1}+\theta_{k2})\times 
D^{(r-1)}_1(-(\alpha_1-\alpha_2-\theta_{21}))\\[4ex]
\ds
JC_{22}J=
2^{r-2}\cdot T\cdot\prod_{k=3}^{r}P^{(r-1)}_{k-1,1}(-\theta_{k1}+\theta_{k2})\times 
D^{(r-1)}_1(\alpha_1+\alpha_2-\theta_{21}),
\end{array}
\]
where $T$ is a matrix of size $2^{r-2}$ which is a product of common factors among them given as
\[
T=
D^{(r-1)}_{r-1}(\alpha_r)
P^{(r-1)}_{r-1,r-2}(\theta'_{r-2,r-1})
\cdots
D^{(r-1)}_3(\alpha_4)
\left(
\prod_{k=3}^r
P^{(r-1)}_{k2}(\theta'_{k2})
\right)
D^{(r-1)}_2(\alpha_3).
\]
If we set $\widetilde{T}=T\otimes I_2$, then we see by definition that 
$\widetilde{T}$ is exactly a product of elements before $D_{3}^{(r)}(\alpha_3)$ in \eqref{formula:proposition}.
Moreover, it is easily verified that it is commutative with $\widetilde{K}=I\otimes J^{(1)}=\frac{1}{\sqrt{2}}\smat{I&I\\I&-I}$,
where we set $I=I_{2^{r-2}}$.

For brevity,
we write $D_{ij}$ for the $D_1^{(r-1)}$ part of $JC_{ij}J$ $(i,j=1,2)$,
and $P_{\pm}$ for the product of $P_{kj}^{(r-1)}$'s where
$\pm$ corresponds to the signature before $\theta_{k1}$.
Then, 
we have
\[
\begin{array}{r@{\ }c@{\ }l}
J^{(r-1)}C_r(\ul{\alpha};\,\Theta_r)J^{(r-1)}
&=&
\ds
\frac{1}{2}\pmat{J&J\\J&-J}
\pmat{C_{11}&C_{12}\\C_{21}&C_{22}}
\pmat{J&J\\J&-J}\\[2em]
&=&\ds
2^{r-2}\cdot 
\frac12\pmat{I&I\\I&-I}
\pmat{TP_+D_{11}&TP_+D_{12}\\ TP_-D_{12}&TP_-D_{22}}
\pmat{I&I\\I&-I}\\[2em]
&=&
\ds
2^{r-2}\cdot 
\widetilde{K}
\pmat{T&0\\0&T}
\pmat{P_+&0\\0&P_-}
\pmat{D_{11}&D_{12}\\D_{21}&D_{22}}
\widetilde{K}\\[2em]
&=&
\ds
2^{r-2}\cdot 
\widetilde{T}\cdot \widetilde{K}
\pmat{P_+&0\\0&P_-}
\widetilde{K}\cdot
\widetilde{K}
\pmat{D_{11}&D_{12}\\D_{21}&D_{22}}
\widetilde{K}.
\end{array}
\] 

\begin{lemma}
\label{lemma:1}
One has
\[
\widetilde{K}
\pmat{D_{11}&D_{12}\\D_{21}&D_{22}}
\widetilde{K}
=
2D^{(r)}_2(\alpha_2)P^{(r)}_{21}(\theta_{21})D^{(r)}_1(\alpha_1).
\]
\end{lemma}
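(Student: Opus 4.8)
The plan is to prove Lemma~\ref{lemma:1} by a direct block computation, reducing everything to the four $2^{r-2}$-dimensional diagonal matrices $D_{ij}$, which by definition are
\[
D_{11}=D^{(r-1)}_1(\alpha_1+\alpha_2+\theta_{21}),\quad
D_{12}=D^{(r-1)}_1(-(\alpha_1-\alpha_2-\theta_{21})),
\]
\[
D_{21}=D^{(r-1)}_1(-(\alpha_1-\alpha_2+\theta_{21})),\quad
D_{22}=D^{(r-1)}_1(\alpha_1+\alpha_2-\theta_{21}),
\]
together with the structure $D^{(r-1)}_1(\beta)=\mathrm{diag}\bigl(\ctt(\beta)I_{2^{r-3}},\stt(\beta)I_{2^{r-3}}\bigr)$. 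First I would multiply out $\widetilde{K}\,\mathrm{diag}(D_{11},D_{12};D_{21},D_{22})\,\widetilde{K}$ using $\widetilde{K}=\tfrac{1}{\sqrt2}\smat{I&I\\I&-I}$, obtaining a $2\times 2$ block matrix whose blocks are the linear combinations $\tfrac12(D_{11}+D_{12}+D_{21}+D_{22})$, $\tfrac12(D_{11}-D_{12}+D_{21}-D_{22})$, and so on. The point is that each such combination, upon applying the addition formulas for $\ctt$ and $\stt$ (i.e.\ $\ctt(a\pm b)=\ctt(a)\ctt(b)\mp\stt(a)\stt(b)$, $\stt(a\pm b)=\stt(a)\ctt(b)\pm\ctt(a)\stt(b)$, which hold because $\ctt,\stt$ are cosine and sine of $\tfrac{\pi}{2}\cdot$), collapses into a product of a term depending only on $\alpha_2$, a term depending only on $\theta_{21}$, and a term depending only on $\alpha_1$.

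The cleanest route is to first handle the innermost level. Writing $a=\alpha_1$, $b=\alpha_2+\theta_{21}$, $b'=\alpha_2-\theta_{21}$, the four matrices are $D^{(r-1)}_1$ evaluated at $a+b$, $-(a-b')$, $-(a-b)$, $a+b'$. Within each $D^{(r-1)}_1(\beta)$ the top half is $\ctt(\beta)I$ and the bottom half is $\stt(\beta)I$; since $\ctt$ is even and $\stt$ is odd, $D^{(r-1)}_1(-(a-b))$ has top block $\ctt(a-b)$ and bottom block $-\stt(a-b)$. So after conjugating by $\widetilde{K}$ one is led, block by block (the top-left $2^{r-3}$-block, etc.), to combinations like $\tfrac12[\ctt(a+b)+\ctt(a-b')-\ctt(a-b)-\ctt(a+b')]$ and the analogous sine combinations; expanding each via the addition formula and collecting, one recognizes exactly the $(2\times 2)$-block entries of $2D^{(r)}_2(\alpha_2)P^{(r)}_{21}(\theta_{21})D^{(r)}_1(\alpha_1)$, using $P^{(r)}_{21}(\theta_{21})=\exp(\theta_{21}X^{(r)}_{21})$ with $X^{(r)}_{21}=I_2\otimes\cdots\otimes I_2\otimes R_2$ in the $r=3$ base case and $X^{(r)}_{21}=A_2\otimes\cdots\otimes A_2\otimes R_2$ otherwise, so that $\exp(\theta X^{(r)}_{21})$ acts as a rotation by $\tfrac{\pi}{2}\theta$-type angle on the relevant $2$-planes (one should note $R_2^2=-I$, so $\exp(\theta R_2)=\cos\theta\,I+\sin\theta\,R_2$, and track that this matches $\ctt,\stt$ after the $\tfrac{\pi}{2}$ normalization is absorbed). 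Then $D^{(r)}_2(\alpha_2)$ is precisely the diagonal matrix whose blocks are $D^{(r-1)}_1(\alpha_2)$ and $D^{(r-1)}_1(\alpha_2+1)$, which accounts for the shift-by-one bookkeeping coming from $\stt(x)=\ctt(x-1)$.

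I would organize the verification as: (i) treat the base rank $r=3$ (where $D^{(3)}_1,D^{(3)}_2$ and $P^{(3)}_{21}$ are explicit $2\times 2$ matrices, so the identity is a finite trigonometric check); (ii) for $r\ge 4$, observe that all four $D_{ij}$ are of the form $D^{(r-1)}_1(\cdot)=D^{(r-2)}_1(\cdot)\otimes$(nothing new)\,---\,more precisely each is block-diagonal with two copies of smaller data\,---\,so the conjugation by $\widetilde K$ splits into parallel copies of the rank-$3$ computation acting on each $2$-dimensional coordinate plane, and the tensor/Kronecker structure of $X^{(r)}_{21}$, $D^{(r)}_1$, $D^{(r)}_2$ is exactly what propagates the base case. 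The main obstacle I anticipate is purely bookkeeping: making sure the ordering convention on $\Ir$ fixed in Section~\ref{sect:preliminary} lines up the $2^{r-2}$-blocks of $D_{ij}$ correctly with the Kronecker-factor positions in $X^{(r)}_{21}$ and $D^{(r)}_2(\alpha_2)$, and correctly tracking the signs produced by the parity of $\stt$ together with the $\pm$ in the arguments $-(a-b)$ versus $a+b$. Once the block positions are pinned down, the trigonometric identities do all the remaining work and the factor of $2$ on the right-hand side is exactly the $\tfrac12$ from $\widetilde K$ on each side cancelling against $\widetilde K^2 = I$ only partially\,---\,i.e.\ it comes from $\tfrac12\cdot(\text{sum of four terms})$ producing a genuine product rather than an average.
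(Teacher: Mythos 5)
Your overall plan—block-multiply by $\widetilde{K}$ on both sides, use parity and addition formulas for $\ctt,\stt$, and match against the Kronecker structure of the right-hand side—is essentially what the paper does; the paper just packages the final trigonometric step differently, by rewriting $\stt(\beta)=\ctt(\beta-1)$ everywhere and then invoking the already-proved $r=2$ identity $J^{(1)}C_2J^{(1)}=2\,\mathrm{diag}(\ctt,\!-\stt)\,P^{(2)}_{21}\,\mathrm{diag}(\ctt,\stt)$ twice (once with argument $\alpha_2$, once with $\alpha_2-1$), rather than re-expanding addition formulas from scratch. Reusing the base case this way is cleaner but not essentially different.

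Two points in your write-up need fixing, though. First, your description of $X^{(r)}_{21}$ is wrong: by the paper's definition, for $k=2,\ j=1$ the count of $A_2$ factors is $k-2=0$ for \emph{every} $r$, so $X^{(r)}_{21}=I_2^{\otimes(r-2)}\otimes R_2$ always; there is never an $A_2$ factor in $X^{(r)}_{21}$, only in $X^{(r)}_{kj}$ with $k\ge3$. Writing $X^{(r)}_{21}=A_2\otimes\cdots\otimes A_2\otimes R_2$ for $r\ge 4$ would give $P^{(r)}_{21}$ the wrong block structure and the final matching against $2D^{(r)}_2(\alpha_2)P^{(r)}_{21}(\theta_{21})D^{(r)}_1(\alpha_1)$ would fail; the correct $P^{(r)}_{21}(\theta)$ is a direct sum of $2\times2$ rotation blocks acting on the last Kronecker slot. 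Second, the separate ``base rank $r=3$ versus $r\ge 4$'' case split is unnecessary: since each $D_{ij}$ is already of the form $\mathrm{diag}(c_{ij}I',\,s_{ij}I')$ with a common $I'=I_{2^{r-3}}$ factor, the whole conjugation reduces to a fixed-size scalar computation with $I'$ tensored trivially through, and this works uniformly for all $r\ge3$ with no need for a further induction inside the lemma. With those two corrections your argument carries through and lands where the paper lands.
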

\begin{proof}
Let us denote by $I'=I_{2^{r-3}}$ the identity matrix of size $2^{r-3}$.
By definition of $D_{ij}$ $(i,j=1,2)$, we have
\[
\begin{array}{l}
\pmat{D_{11}&D_{12}\\D_{21}&D_{22}}\\[1.5em]
\quad
=
\smat{\ctt(\alpha_1+\alpha_2+\theta_{21})I'&0&\ctt(-(\alpha_1-\alpha_2-\theta_{21}))I'&0\\
0&\stt(\alpha_1+\alpha_2+\theta_{21})I'&0&\stt(-(\alpha_1-\alpha_2-\theta_{21}))I'\\
\ctt(-(\alpha_1-\alpha_2+\theta_{21}))I'&0&\ctt(\alpha_1+\alpha_2-\theta_{21})I'&0\\
0&\stt(-(\alpha_1-\alpha_2+\theta_{21}))I'&0&\stt(\alpha_1+\alpha_2-\theta_{21})I'}\\[3em]
\quad
=
\smat{\ctt(\alpha_1+\alpha_2+\theta_{21})I'&0&\ctt(\alpha_1-\alpha_2-\theta_{21})I'&0\\
0&\ctt(\alpha_1+(\alpha_2-1)+\theta_{21})I'&0&\ctt(\alpha_1-(\alpha_2-1)-\theta_{21})I'\\
\ctt(\alpha_1-\alpha_2+\theta_{21})I'&0&\ctt(\alpha_1+\alpha_2-\theta_{21})I'&0\\
0&\ctt(\alpha_1-(\alpha_2-1)+\theta_{21})I'&0&\ctt(\alpha_1+(\alpha_2-1)-\theta_{21})I'}
\end{array}
\]
so that we can apply the result of the case $r=2$.
Since
\[
\begin{array}{l}
J^{(1)}\pmat{\ctt(\alpha_1+\alpha_2+\theta_{21})&\ctt(\alpha_1-\alpha_2-\theta_{21})\\
\ctt(\alpha_1-\alpha_2+\theta_{21})&\ctt(\alpha_1+\alpha_2-\theta_{21})}J^{(1)}\\
\ds\qquad=
2
\pmat{\ctt(\alpha_2)&0\\0&-\stt(\alpha_2)}
P^{(2)}_{21}(\theta_{21})
\pmat{\ctt(\alpha_1)&0\\0&\stt(\alpha_1)}
\end{array}
\]
and
\[
\begin{array}{l}
\ds J^{(1)}\pmat{\ctt(\alpha_1+(\alpha_2-1)+\theta_{21})&\ctt(\alpha_1-(\alpha_2-1)-\theta_{21})\\
\ctt(\alpha_1-(\alpha_2-1)+\theta_{21})&\ctt(\alpha_1+(\alpha_2-1)-\theta_{21})}J^{(1)}\\[1.5em]
\ds\qquad=
2
\pmat{\ctt(\alpha_2-1)&0\\0&-\stt(\alpha_2-1)}
P^{(2)}_{21}(\theta_{21})
\pmat{\ctt(\alpha_1)&0\\0&\stt(\alpha_1)},
\end{array}
\]
the diagonal matrix with respect to $\alpha_2$ can be described as
\[
\mathrm{diag}
\bigl(\ctt(\alpha_2)I',\ctt(\alpha_2-1)I',-\stt(\alpha_2)I',-\stt(\alpha_2-1)I'\bigr)
=
\mathrm{diag}
\bigl(\ctt(\alpha_2)I',\stt(\alpha_2)I',\ctt(\alpha_2+1)I',\stt(\alpha_2+1)I'\bigr),
\]
where we use $\ctt(a-1)=\stt(a)$ in this equation.
Therefore, we obtain
\[
\begin{array}{r@{\ }c@{\ }l}
\ds
\widetilde{K}
\pmat{D_{11}&D_{12}\\D_{21}&D_{22}}
\widetilde{K}
&=&\ds
2\pmat{D^{(r-1)}_1(\alpha_2)&0\\0&D^{(r-1)}_1(\alpha_2+1)}
\pmat{\ctt(\theta_{21})I&-\stt(\theta_{21})I\\ \stt(\theta_{21})I&\ctt(\theta_{21})I}
\pmat{\ctt(\alpha_1)I&0\\0&\stt(\alpha_1)I}\\[1.5em]
&=&\ds
2D^{(r)}_2(\alpha_2)P^{(r)}_{21}(\theta_{21})D^{(r)}_1(\alpha_1),
\end{array}
\]
as required.
\end{proof}

\begin{lemma}
\label{lemma:2}
One has
\[
\widetilde{K}
\pmat{P_+&0\\0&P_-}
\widetilde{K}
=
\prod_{k=3}^{r}
P^{(r)}_{k2}(\theta_{k2})
\cdot
\prod_{k=3}^{r}
\widetilde{K}
\pmat{P_{k-1,1}^{(r-1)}(\theta_{k1})&0\\0&{}^{\,t\!}P_{k-1,1}^{(r-1)}(\theta_{k1})}
\widetilde{K}.
\]
\end{lemma}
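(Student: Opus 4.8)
The plan is to exploit the one–parameter group structure $P^{(r-1)}_{k-1,1}(\theta)=\exp\bigl(\theta X^{(r-1)}_{k-1,1}\bigr)$ of the orthogonal matrices occurring in $P_\pm$, together with the fact that $\widetilde{K}$ is an involution ($\widetilde{K}\widetilde{K}=I$). Since each generator $X^{(r-1)}_{k-1,1}$ is skew-symmetric one has $P^{(r-1)}_{k-1,1}(-\theta)={}^tP^{(r-1)}_{k-1,1}(\theta)$, and since, for the fixed second index $1$, the generators $X^{(r-1)}_{k-1,1}$ $(k=3,\dots,r)$ pairwise commute, so do the corresponding orthogonal matrices and their transposes.

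First I would separate the two families of angles. Put $P_a:=\prod_{k=3}^rP^{(r-1)}_{k-1,1}(\theta_{k1})$ and $P_c:=\prod_{k=3}^rP^{(r-1)}_{k-1,1}(\theta_{k2})$. Additivity of the one-parameter group gives $P^{(r-1)}_{k-1,1}(\theta_{k1}+\theta_{k2})=P^{(r-1)}_{k-1,1}(\theta_{k1})P^{(r-1)}_{k-1,1}(\theta_{k2})$ and $P^{(r-1)}_{k-1,1}(-\theta_{k1}+\theta_{k2})={}^tP^{(r-1)}_{k-1,1}(\theta_{k1})P^{(r-1)}_{k-1,1}(\theta_{k2})$, so after regrouping (legitimate by commutativity) $P_+=P_aP_c$ and $P_-={}^tP_a\,P_c=P_c\,{}^tP_a$. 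Hence
\[
\pmat{P_+&0\\0&P_-}=\pmat{P_c&0\\0&P_c}\pmat{P_a&0\\0&{}^tP_a},
\]
and the two block-diagonal factors commute, so conjugating by $\widetilde{K}$ and inserting $\widetilde{K}\widetilde{K}=I$ between them yields
\[
\widetilde{K}\pmat{P_+&0\\0&P_-}\widetilde{K}
=\Bigl(\widetilde{K}\pmat{P_c&0\\0&P_c}\widetilde{K}\Bigr)\Bigl(\widetilde{K}\pmat{P_a&0\\0&{}^tP_a}\widetilde{K}\Bigr).
\]

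It then remains to identify the two factors. For the $P_a$-factor I would telescope: block-diagonal multiplication, together with the fact that transposes of commuting matrices commute, gives $\pmat{P_a&0\\0&{}^tP_a}=\prod_{k=3}^r M_k$ with $M_k=\pmat{P^{(r-1)}_{k-1,1}(\theta_{k1})&0\\0&{}^tP^{(r-1)}_{k-1,1}(\theta_{k1})}$, and then $\widetilde{K}\bigl(\prod_kM_k\bigr)\widetilde{K}=\prod_k\bigl(\widetilde{K}M_k\widetilde{K}\bigr)$ because the inner pairs $\widetilde{K}\widetilde{K}$ cancel; this is precisely the last product in the asserted identity. For the $P_c$-factor I would write $\widetilde{K}=I_{2^{r-2}}\otimes J^{(1)}$ and $\pmat{P_c&0\\0&P_c}=P_c\otimes I_2$; the mixed-product rule for the Kronecker product together with $(J^{(1)})^2=I_2$ gives $\widetilde{K}(P_c\otimes I_2)\widetilde{K}=P_c\otimes I_2$. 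Finally, from $\exp(\theta X)\otimes I_2=\exp\bigl(\theta(X\otimes I_2)\bigr)$ and the identity $X^{(r-1)}_{k-1,1}\otimes I_2=X^{(r)}_{k2}$, read off from the definition of the generators, we get $P^{(r-1)}_{k-1,1}(\theta)\otimes I_2=P^{(r)}_{k2}(\theta)$, hence $P_c\otimes I_2=\prod_{k=3}^rP^{(r)}_{k2}(\theta_{k2})$, the first product in the asserted identity. Since the order of the two factors matches the right-hand side, the claimed identity follows.

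The only step I expect to need care is the generator identity $X^{(r-1)}_{k-1,1}\otimes I_2=X^{(r)}_{k2}$: one must count the $I_2$-, $A_2$- and $R_2$-tensor slots and check that tensoring an extra $I_2$ on the right moves the $R_2$-slot into exactly the position prescribed by the $j=2$ generator of rank $r$. Everything else — additivity and skew-symmetry of the one-parameter groups, commutativity for a fixed second index, and $\widetilde{K}\widetilde{K}=I$ — is immediate from the definitions in this section.
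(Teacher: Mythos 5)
Your argument follows the paper's own proof step for step: use the one-parameter group additivity and transposition to split each angle $\pm\theta_{k1}+\theta_{k2}$, regroup by commutativity of the $\{P^{(r-1)}_{k-1,1}\}_{k}$, identify the $\theta_{k2}$ block as $P^{(r)}_{k2}$, note it commutes with $\widetilde{K}$, and telescope the remaining conjugations through $\widetilde{K}\widetilde{K}=I$. So this is essentially the same route, with the $P_a$/$P_c$ separation and the insertion of $\widetilde{K}\widetilde{K}$ made a bit more explicit than in the text.

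One heads-up on the step you correctly singled out. The identity you want is $X^{(r-1)}_{k-1,1}\otimes I_2=X^{(r)}_{k2}$, and the paper does implicitly use it when writing $P_{k-1,1}^{(r-1)}(\theta)\otimes I_2=P^{(r)}_{k2}(\theta)$. However, if you count slots against the displayed definition
\[
X_{kj}^{(r)}=
\overbrace{I_2\otimes\cdots\otimes I_2}^{r-k-j+1}
\otimes
\overbrace{A_2\otimes\cdots\otimes A_2}^{k-2}
\otimes R_2 \otimes
\overbrace{I_2\otimes\cdots\otimes I_2}^{j-1},
\]
the counts do not match, and in fact the exponent $r-k-j+1$ is negative already for $j=2$, $k=r$. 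The exponents must actually be $r-k$ for the leading $I_2$'s and $k-j-1$ for the $A_2$'s (they agree with the printed ones precisely when $j=1$, which is the only case the rest of the paper's argument explicitly exercises via $X^{(r)}_{k1}=Y'_{k-1}\otimes A_2\otimes R_2$); with that correction both the generator identity $X^{(r-1)}_{k',j'}\otimes I_2=X^{(r)}_{k'+1,j'+1}$ and the slot-by-slot commutativity for fixed second index that you invoke do hold. So your proof is right, but the careful check you flagged should be made against the corrected exponents rather than the formula as printed.
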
	
\begin{proof}
By definition of $P_{kj}^{(r-1)}$,
we see that
$P^{(r-1)}_{kj}(\theta+\phi)=P^{(r-1)}_{kj}(\theta)P^{(r-1)}_{kj}(\phi)$ and
$P^{(r-1)}_{kj}(-\theta)={}^{\,t\!}P^{(r-1)}_{kj}(\theta)$.
The hypothesis of induction tells us that
each $P^{(r-1)}_{k-1,1}$ $(k=3,\dots,r)$ are mutually commutative and hence
we have 
\[
\begin{array}{r@{\ }c@{\ }l}
\pmat{P_+&0\\0&P_-}
&=&\ds
\pmat{\ds\prod_{k=3}^{r}P^{(r-1)}_{k-1,1}(\theta_{k1}+\theta_{k2})&0\\0&\ds\prod_{k=3}^{r}P^{(r-1)}_{k-1,1}(-\theta_{k1}+\theta_{k2})}\\[3.5em]
&=&
\ds
\prod_{k=3}^{r}
\pmat{P_{k-1,1}^{(r-1)}(\theta_{k2})&0\\0&P_{k-1,1}^{(r-1)}(\theta_{k2})}
\prod_{k=3}^{r}
\pmat{P_{k-1,1}^{(r-1)}(\theta_{k1})&0\\0&P_{k-1,1}^{(r-1)}(-\theta_{k1})}\\[2.5em]
&=&
\ds
\prod_{k=3}^{r}
P^{(r)}_{k2}(\theta_{k2})
\prod_{k=3}^{r}
\pmat{P_{k-1,1}^{(r-1)}(\theta_{k1})&0\\0&{}^{\,t\!}P_{k-1,1}^{(r-1)}(\theta_{k1})}.
\end{array}\]
Since 
$P^{(r)}_{k2}(\theta_{k2})$ commutes with $\widetilde{K}$ for any $k\ge 3$,
we obtain the lemma.
\end{proof}

\begin{lemma}
\label{lemma:3}
For $k\ge 3$, one has
\[
\widetilde{K}
\pmat{P_{k-1,1}^{(r-1)}(\theta_{k1})&0\\0&{}^{\,t\!}P_{k-1,1}^{(r-1)}(\theta_{k1})}
\widetilde{K}
\cdot
D^{(r)}_2(\alpha_2)
=
D^{(r)}_2(\alpha_2)
P^{(r)}_{k1}(\theta_{k1}).
\]
\end{lemma}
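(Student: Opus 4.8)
Here is how I would prove Lemma~\ref{lemma:3}.

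The plan is to rewrite both sides as single Kronecker expressions so that the identity collapses to a $4\times 4$ matrix verification. The only structural facts needed are that $R_2$ is skew-symmetric while $I_2$ and $A_2$ are symmetric, that $R_2^2=-I_2$ and $A_2^2=I_2$, and that $\ctt(\alpha+1)=-\stt(\alpha)$, $\stt(\alpha+1)=\ctt(\alpha)$.

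First I would handle the left-hand side. Since $X^{(r-1)}_{k-1,1}$ is a Kronecker product of copies of $I_2$ and $A_2$ together with exactly one factor $R_2$, we have ${}^{t}X^{(r-1)}_{k-1,1}=-X^{(r-1)}_{k-1,1}$, hence ${}^{t}P^{(r-1)}_{k-1,1}(\theta)=\exp\bigl(-\theta\,X^{(r-1)}_{k-1,1}\bigr)$. Therefore
\[
\pmat{P^{(r-1)}_{k-1,1}(\theta)&0\\0&{}^{t}P^{(r-1)}_{k-1,1}(\theta)}=\exp\Bigl(\theta\,X^{(r-1)}_{k-1,1}\otimes\pmat{1&0\\0&-1}\Bigr),
\]
and conjugating by $\widetilde{K}=I_{2^{r-2}}\otimes J^{(1)}$, together with the elementary identity $J^{(1)}\smat{1&0\\0&-1}J^{(1)}=A_2$, turns this into $\exp\bigl(\theta\,X^{(r-1)}_{k-1,1}\otimes A_2\bigr)$. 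Next I would align the tensor factors with $X^{(r)}_{k1}$: putting $Y:=I_2^{\otimes(r-k)}\otimes A_2^{\otimes(k-3)}$ (that is, $r-k$ factors $I_2$ followed by $k-3$ factors $A_2$), so that $Y^2=I_{2^{r-3}}$, the definition of $X^{(r)}_{kj}$ gives $X^{(r-1)}_{k-1,1}=Y\otimes R_2$ and $X^{(r)}_{k1}=Y\otimes A_2\otimes R_2$, whence $X^{(r-1)}_{k-1,1}\otimes A_2=Y\otimes(R_2\otimes A_2)$ and $X^{(r)}_{k1}=Y\otimes(A_2\otimes R_2)$. Because $(R_2\otimes A_2)^2=(A_2\otimes R_2)^2=-I_4$ and $Y^2=I$, each of these matrices squares to $-I_{2^{r-1}}$, so expanding the exponentials to first order gives
\[
\widetilde{K}\pmat{P^{(r-1)}_{k-1,1}(\theta)&0\\0&{}^{t}P^{(r-1)}_{k-1,1}(\theta)}\widetilde{K}=\cos\theta\cdot I+\sin\theta\cdot Y\otimes(R_2\otimes A_2),\qquad P^{(r)}_{k1}(\theta)=\cos\theta\cdot I+\sin\theta\cdot Y\otimes(A_2\otimes R_2).
\]

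Finally I would identify $D^{(r)}_2(\alpha_2)$: from $D^{(r)}_2(\alpha)=\smat{D^{(r-1)}_1(\alpha)&0\\0&D^{(r-1)}_1(\alpha+1)}$ and the reduction formulas $\ctt(\alpha+1)=-\stt(\alpha)$, $\stt(\alpha+1)=\ctt(\alpha)$, one reads off $D^{(r)}_2(\alpha)=I_{2^{r-3}}\otimes D$ with $D=\mathrm{diag}\bigl(\ctt(\alpha),\stt(\alpha),-\stt(\alpha),\ctt(\alpha)\bigr)$. Substituting the two previous expressions, using the mixed-product rule and $Y\,I_{2^{r-3}}=I_{2^{r-3}}\,Y=Y$, the $\cos\theta$ parts match automatically and the claimed identity reduces to the purely $4\times 4$ relation
\[
(R_2\otimes A_2)\,D=D\,(A_2\otimes R_2),
\]
which one checks by a direct computation (and which, since all three matrices are symmetric or skew-symmetric, is equivalent to its transpose $D\,(R_2\otimes A_2)=(A_2\otimes R_2)\,D$). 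I expect the only genuine obstacle to be the bookkeeping of Kronecker orderings: one must arrange the factors consistently with the conventions fixed in Section~\ref{sect:proof_main} — in particular viewing $\widetilde{K}=I_{2^{r-2}}\otimes J^{(1)}$ as acting on the last tensor slot, and writing $X^{(r)}_{k1}$ as $Y\otimes A_2\otimes R_2$ — after which every remaining step is an elementary $2\times 2$ or $4\times 4$ matrix identity.
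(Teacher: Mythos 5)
Your proof is correct and relies on the same structural facts as the paper's — namely that $X^{(r-1)}_{k-1,1}=Y\otimes R_2$ and $X^{(r)}_{k1}=Y\otimes A_2\otimes R_2$ for the same $Y=Y'_{k-1}=I_2^{\otimes(r-k)}\otimes A_2^{\otimes(k-3)}$, that these square to $-I$, and the explicit form of $D^{(r)}_2(\alpha_2)$. The packaging is a bit slicker than the paper's: you fold the pair $P,\,{}^tP$ into a single exponential via skew-symmetry of $X^{(r-1)}_{k-1,1}$, conjugate it through the identity $J^{(1)}\smat{1&0\\0&-1}J^{(1)}=A_2$, and reduce the whole lemma to the $4\times4$ intertwining relation $(R_2\otimes A_2)D=D(A_2\otimes R_2)$ — whereas the paper writes out the $2^{r-1}\times 2^{r-1}$ block matrices explicitly and leaves the final multiplication to the reader; but these are the same computation reorganized, not a genuinely different route. (One cosmetic point: the paper's explicit block formulas use $\ctt,\stt$ rather than $\cos,\sin$, reflecting a silent normalization $P^{(r)}_{kj}(\theta)=\exp(\tfrac{\pi\theta}{2}X^{(r)}_{kj})$ in the proofs despite the statement of \eqref{def:Pkj}; since the lemma is an identity in the matrices $P$ and not in the parameter $\theta$, this does not affect your argument.)
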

\begin{proof}
Recall the definition of $P_{kj}^{(r)}$ in \eqref{def:Pkj}.
Let $Y'_{k-1}$ be a matrix of size $2^{r-3}$ defined by $X_{k-1,1}^{(r-1)}=Y'_{k-1}\otimes R_2$.
Then, $Y'_{k-1}$ is a symmetric matrix satisfying $(Y')^2=I'$,
and we have
\[
P^{(r-1)}_{k-1,1}(\theta_{k1})=\pmat{\ctt(\theta_{k1})I'&-\stt(\theta_{k1})Y'_{k-1}\\ \stt(\theta_{k1})Y'_{k-1}&\ctt(\theta_{k1})I'}.
\]
Therefore, we obtain
\[
\begin{array}{l}
\ds
\widetilde{K}
\pmat{P_{k-1,1}^{(r-1)}(\theta_{k1})&0\\0&{}^{\,t\!}P_{k-1,1}^{(r-1)}(\theta_{k1})}
\widetilde{K}\\[1em]
\qquad=
\ds\frac{1}{2}
\pmat{
P_{k-1,1}^{(r-1)}(\theta_{k1})+{}^{\,t\!}P_{k-1,1}^{(r-1)}(\theta_{k1})
&
P_{k-1,1}^{(r-1)}(\theta_{k1})-{}^{\,t\!}P_{k-1,1}^{(r-1)}(\theta_{k1})\\
P_{k-1,1}^{(r-1)}(\theta_{k1})-{}^{\,t\!}P_{k-1,1}^{(r-1)}(\theta_{k1})
&
P_{k-1,1}^{(r-1)}(\theta_{k1})+{}^{\,t\!}P_{k-1,1}^{(r-1)}(\theta_{k1})
}\\[1em]
\qquad=
\ds
\smat{\ctt(\theta_{k1})I'&0&0&-\stt(\theta_{k1})Y'_{k-1}\\
0&\ctt(\theta_{k1})I'&\stt(\theta_{k1})Y'_{k-1}&0\\
0&-\stt(\theta_{k1})Y'_{k-1}&\ctt(\theta_{k1})I'&0\\
\stt(\theta_{k1})Y'_{k-1}&0&0&\ctt(\theta_{k1})I'
}.
\end{array}
\]
On the other hand,
by definition of $P^{(r)}_{k1}(\theta_{k1})$ as in \eqref{def:Pkj},
we have
\[
\begin{array}{r@{\ }c@{\ }l}
P_{k1}^{(r)}(\theta_{k1})
&=&
\exp\left(\theta_{k1}Y'_{k-1}\otimes A_2\otimes R_2\right)\\
&=&
\ds
\smat{
\ctt(\theta_{k1})I'&0&0&-\stt(\theta_{k1})Y'_{k-1}\\
0&\ctt(\theta_{k1})I'&-\stt(\theta_{k1})Y'_{k-1}&0\\
0&\stt(\theta_{k1})Y'_{k-1}&\ctt(\theta_{k1})I'&0\\
\stt(\theta_{k1})Y'_{k-1}&0&0&\ctt(\theta_{k1})I'
}.
\end{array}
\]
Since $D_2^{(r)}(\alpha_2)$ is a diagonal matrix of the form
\[
D_2^{(r)}(\alpha_2)=\mathrm{diag}(\ctt(\alpha_2)I',\,\stt(\alpha_2)I',\,-\stt(\alpha_2)I',\,\ctt(\alpha_2)I'),
\]
now we can easily verify the assertion.
\end{proof}

We now return to the proof of Proposition~\ref{prop:conjugate C}.
By Lemmas~\ref{lemma:1}, \ref{lemma:2} and \ref{lemma:3},
we finally obtain
\[
\begin{array}{l}
J^{(r-1)}C_r(\ul{\alpha};\,\Theta_r)J^{(r-1)}\\
\qquad=
\ds
2^{r-2}\cdot
\widetilde{T}\cdot \widetilde{K}
\pmat{P_+&0\\0&P_-}
\widetilde{K}
\cdot 
\widetilde{K}
\pmat{D_{11}&D_{12}\\D_{21}&D_{22}}
\widetilde{K}\\[1em]
\qquad=\ds
2^{r-2}\cdot
\widetilde{T}\cdot
\prod_{k=3}^{r}
P^{(r)}_{k2}(\theta_{k2})
\cdot
\prod_{k=3}^{r}
\widetilde{K}
\pmat{P_{k-1,1}^{(r-1)}(\theta_{k1})&0\\0&{}^{\,t\!}P_{k-1,1}^{(r-1)}(\theta_{k1})}
\widetilde{K}\\[1em]
\qquad\quad\ds\quad\times
2D^{(r)}_2(\alpha_2)P^{(r)}_{21}(\theta_{21})D^{(r)}_1(\alpha_1)\\[1em]
\qquad=
\ds
2^{r-1}
\widetilde{T}
\prod_{k=3}^{r}
P^{(r)}_{k2}(\theta_{k2})
\cdot D^{(r)}_2(\alpha_2)
\prod_{k=3}^r
P^{(r)}_{k1}(\theta_{k1})
\cdot
P^{(r)}_{21}(\theta_{21})
D^{(r)}_1(\alpha_1),
\end{array}
\]
which is exactly the form of \eqref{formula:proposition},
and hence we have proved Proposition~\ref{prop:conjugate C}.
\qed

\vspace{1ex}

By Proposition~\ref{prop:conjugate C}, we have obtained
\[
\begin{array}{r@{\ }c@{\ }l}
J^{(r)}A_r(\ul{\alpha})J^{(r)}
&=&
\ds
2
\pmat{J^{(r-1)}C_r(\ul{\alpha};\,\Theta_r)J^{(r-1)}&0\\
0&\sqrt{-1}\,J^{(r-1)}C_r(\ul{\alpha}-\ul{e}_1;\,\Theta_r)J^{(r-1)}}\\
&=&
\ds
2^r\pmat{I&0\\0&\sqrt{-1}\,I}
\revProd[j=2]
\left\{
\prod_{k=j+1}^r
\pmat{P^{(r)}_{kj}(\theta_{kj})&0\\0&P^{(r)}_{kj}(\theta_{kj})}
\cdot
\pmat{D^{(r)}_j(\alpha_j)&0\\0&D^{(r)}_j(\alpha_j)}
\right\}\\
& &
\ds
\quad\times
\pmat{P^{(r)}_{k1}(\theta_{k1})&0\\0&P^{(r)}_{k1}(\theta_{k1})}
\cdot
\pmat{D^{(r)}_1(\alpha_1)&0\\0&D^{(r)}_1(\alpha_1-1)}
\end{array}
\]
By assigning gamma factors $\frac{\Gamma(\alpha_i)}{(2\pi)^{\alpha_i}}$ to 
each diagonal entry,
we obtain the following theorem.

\begin{theorem}
\label{theorem}
For each $j=1,\dots,r$ and $1\le j<k\le r$,
one sets
\[
\begin{array}{l}
\ds
\widetilde{D}^{(r)}_1(\alpha_1)
=
\frac{\Gamma(\alpha_1)}{(2\pi)^{\alpha_1}}
\pmat{D^{(r)}_1(\alpha_1)&0\\0&D^{(r)}_1(\alpha_1-1)},\\
\ds
\widetilde{D}^{(r)}_j(\alpha_j)
=
\frac{\Gamma(\alpha_j)}{(2\pi)^{\alpha_j}}
\pmat{D^{(r)}_j(\alpha_j)&0\\0&D^{(r)}_j(\alpha_j)}\quad(j=1,\dots,r),\\
\ds
\widetilde{P}^{(r)}_{kj}(\theta_{kj})
=
\pmat{P^{(r)}_{kj}(\theta_{kj})&0\\0&P^{(r)}_{kj}(\theta_{kj})}
\quad(1\le j<k\le r).
\end{array}
\]
Then the zeta distributions $\bs{Z}(f;\,\ul{s})$ and $\bs{Z}^*(f^*;\,\ul{s})$ satisfies the following functional equation
\[
W_\sigma\bs{Z}(\mathcal{F}\relax[f^*];\,\ul{s})
=
\mathcal{A}_r\Bigl(\ul{s}\sigma-\frac12\ul{p}\Bigr)
W_{\sigma_*}\bs{Z}^*(f^*;\,\tau(\ul{s})),
\]
where $\mathcal{A}_r(\ul{\alpha})$ $(\ul{\alpha}\in\C^r)$ is a product of variable-wise matrices given as
\[
\mathcal{A}_r(\ul{\alpha})
=
2^r
\pmat{I_{2^{r-1}}&0\\0&\sqrt{-1}\,I_{2^{r-1}}}
\revProd[j=1]\Bigl(
\prod_{k=j+1}^r \widetilde{P}^{(r)}_{kj}(\theta_{kj})
\cdot
\widetilde{D}^{(r)}_j(\alpha_j)\Bigr).
\]
\end{theorem}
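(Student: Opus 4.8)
The plan is to derive the stated functional equation directly from the functional equation \eqref{eq:FE} for the vector-valued local zeta functions by conjugating through the Hadamard matrix $J^{(r)}$ and translating between local zeta functions and zeta distributions via \eqref{eq:relation}. First I would use that $J^{(r)}$ is orthogonal and symmetric, so $(J^{(r)})^2=I_{2^r}$, and multiply \eqref{eq:FE} on the left by $J^{(r)}$, inserting $(J^{(r)})^2$ just in front of $\vdlzf{f^*}{\tau(\ul{s})}$ on the right. By \eqref{eq:relation} the left-hand side becomes $W_\sigma\bs{Z}(\Fourier[f^*];\,\ul{s})$, while the trailing factor $J^{(r)}\vdlzf{f^*}{\tau(\ul{s})}$ becomes $W_{\sigma_*}\bs{Z}^*(f^*;\,\tau(\ul{s}))$; so everything reduces to identifying the conjugated gamma matrix $J^{(r)}\bigl(\tfrac{\Gamma_\Omega(\ul{s}\sigma)}{(2\pi)^{|\ul{s}\sigma|}}A_r(\ul{s}\sigma-\tfrac12\ul{p})\bigr)J^{(r)}$ with $\mathcal{A}_r(\ul{s}\sigma-\tfrac12\ul{p})$.

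For that step I would set $\ul{\alpha}=\ul{s}\sigma-\tfrac12\ul{p}$ and note, using $|\ul{p}|=n-r$ in \eqref{eq:defofgamma}, that the prefactor $\tfrac{\Gamma_\Omega(\ul{s}\sigma)}{(2\pi)^{|\ul{s}\sigma|}}=\tfrac{\Gamma(\ul{\alpha})}{(2\pi)^{|\ul{\alpha}|}}=\prod_{j=1}^r\tfrac{\Gamma(\alpha_j)}{(2\pi)^{\alpha_j}}$ is a scalar and pulls out of the conjugation, leaving $\tfrac{\Gamma(\ul{\alpha})}{(2\pi)^{|\ul{\alpha}|}}\,J^{(r)}A_r(\ul{\alpha})J^{(r)}$. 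Now $J^{(r)}A_r(\ul{\alpha})J^{(r)}$ has already been put, by way of the block splitting \eqref{eq:CS}, the identity $S_r(\ul{\alpha};\,\Theta_r)=C_r(\ul{\alpha}-\ul{e}_1;\,\Theta_r)$, and Proposition~\ref{prop:conjugate C}, into the explicit block product displayed just before the statement. It then remains only to attach the scalar gamma factors: distributing the $r$ factors $\tfrac{\Gamma(\alpha_j)}{(2\pi)^{\alpha_j}}$ one onto each of the $r$ blocks $D^{(r)}_j$ appearing in the reversed product converts $\mathrm{diag}(D^{(r)}_j,D^{(r)}_j)$ into $\widetilde{D}^{(r)}_j(\alpha_j)$ for $j\ge 2$ and the special block $\mathrm{diag}(D^{(r)}_1(\alpha_1),D^{(r)}_1(\alpha_1-1))$ --- whose $\alpha_1\mapsto\alpha_1-1$ shift comes from $S_r$ --- into $\widetilde{D}^{(r)}_1(\alpha_1)$, while the $P$-blocks become $\widetilde{P}^{(r)}_{kj}$; specializing $\theta_{kj}=n_{kj}/2$ to the structure constants of $\Omega$ reproduces exactly $\mathcal{A}_r(\ul{\alpha})$, which finishes the argument.

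I do not expect a genuinely new obstacle here: the substantive combinatorial and matrix-analytic work is all contained in Proposition~\ref{prop:conjugate C} and the block identity \eqref{eq:CS}, and the theorem is essentially their assembly together with the zeta-distribution dictionary \eqref{eq:relation}. The points I would be most careful about are the use of orthogonality and symmetry of $J^{(r)}$ to legitimize inserting $(J^{(r)})^2$, and the bookkeeping around the asymmetric $j=1$ block, so that the uniform scalar prefactor $\Gamma(\alpha_1)/(2\pi)^{\alpha_1}$ is matched to the definition of $\widetilde{D}^{(r)}_1(\alpha_1)$ even though its matrix part carries the $\alpha_1\mapsto\alpha_1-1$ shift.
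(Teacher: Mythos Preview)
Your proposal is correct and follows essentially the same route as the paper: conjugate the functional equation \eqref{eq:FE} by $J^{(r)}$, invoke \eqref{eq:relation} to pass to zeta distributions, rewrite the scalar prefactor via $|\ul{p}|=n-r$, apply the block splitting \eqref{eq:CS} together with Proposition~\ref{prop:conjugate C} to both blocks, and then distribute the individual factors $\Gamma(\alpha_j)/(2\pi)^{\alpha_j}$ onto the corresponding $D^{(r)}_j$-blocks (with the $j=1$ block carrying the $\alpha_1\mapsto\alpha_1-1$ shift from $S_r$). The paper does exactly this, presenting the assembled formula for $J^{(r)}A_r(\ul{\alpha})J^{(r)}$ immediately before the theorem and then simply stating that attaching the gamma factors yields the result.
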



Proposition~\ref{prop:conjugate C} enables us to compute the determinant of gamma matrices.
Set
\[
\widetilde{A}_r(\ul{\alpha},\,\Theta_r)=\frac{\Gamma(\ul{\alpha})}{(2\pi)^{|\ul{\alpha}|}}A_r(\ul{\alpha},\,\Theta_r)\quad(\ul{\alpha}\in\C^r).
\]
Note that $\widetilde{A}_r(\ul{s}\sigma-\frac12\ul{p},\,\Theta_r)$
for $\Theta_{r}=\bigl(\frac{n_{kj}}{2}\bigr)_{1\le j<k\le r}$ is exactly the gamma matrix in \eqref{eq:FE}.

\begin{corollary}
For $r\ge 2$, one has
\[
\det \widetilde{A}_r(\ul{\alpha},\Theta_r)
=
\left(\frac{\Gamma(\ul{\alpha})}{(2\pi)^{|\ul{\alpha}|}}\right)^{2^r}
\Bigl(\prod_{j=1}^r
2\sin\pi\alpha_j\Bigr)^{2^{r-1}}.
\]
\end{corollary}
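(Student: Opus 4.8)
The plan is to exploit the decomposition of the gamma matrix provided by Proposition~\ref{prop:conjugate C} together with the block structure in~\eqref{eq:CS}. Since $J^{(r)}$ is orthogonal, conjugation by $J^{(r)}$ does not change the determinant, so $\det\widetilde{A}_r(\ul{\alpha},\Theta_r)=\det\bigl(J^{(r)}\widetilde{A}_r(\ul{\alpha},\Theta_r)J^{(r)}\bigr)$. By~\eqref{eq:CS} this conjugated matrix is block-diagonal with blocks $\tfrac{\Gamma(\ul{\alpha})}{(2\pi)^{|\ul{\alpha}|}}\cdot 2\,J^{(r-1)}C_r(\ul{\alpha};\Theta_r)J^{(r-1)}$ and $\tfrac{\Gamma(\ul{\alpha})}{(2\pi)^{|\ul{\alpha}|}}\cdot 2\sqrt{-1}\,J^{(r-1)}C_r(\ul{\alpha}-\ul{e}_1;\Theta_r)J^{(r-1)}$, so the determinant factors as the product of the determinants of these two blocks.

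First I would compute $\det\bigl(J^{(r-1)}C_r(\ul{\alpha};\Theta_r)J^{(r-1)}\bigr)$ using the right-hand side of~\eqref{formula:proposition}. The matrices $P^{(r)}_{kj}(\theta_{kj})=\exp(\theta_{kj}X^{(r)}_{kj})$ are orthogonal, and moreover $X^{(r)}_{kj}$ is a Kronecker product of $A_2$'s, $I_2$'s and a single $R_2=\smat{0&-1\\1&0}$; since $\det A_2=-1$ and $\det R_2=1$, one checks that $\det X^{(r)}_{kj}=\pm1$ and in any case $\det P^{(r)}_{kj}(\theta)=\exp(\theta\cdot\operatorname{tr}X^{(r)}_{kj})=1$ because $\operatorname{tr}R_2=\operatorname{tr}A_2=0$. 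So all the orthogonal factors contribute $1$ to the determinant, and only the diagonal matrices $D^{(r)}_j(\alpha_j)$ matter. From their inductive definition, $D^{(r)}_j(\alpha_j)$ is a tensor-type diagonal matrix whose entries, read off over the index set $\dIr$, are $\pm\ctt$ and $\pm\stt$ of shifted arguments; tracking the construction, $\det D^{(r)}_1(\alpha)=\bigl(\ctt(\alpha)\stt(\alpha)\bigr)^{2^{r-2}}$ and likewise for each $D^{(r)}_j$ one gets $\det D^{(r)}_j(\alpha_j)=\bigl(\ctt(\alpha_j)\stt(\alpha_j)\bigr)^{2^{r-2}}$ (the sign contributions and the shift $\alpha_2\mapsto\alpha_2+1$ appearing in $D^{(r)}_2$ cancel in pairs, using $\ctt(\alpha+1)\stt(\alpha+1)=-\ctt(\alpha)\stt(\alpha)$ together with the even multiplicities). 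Hence $\det\bigl(J^{(r-1)}C_r(\ul{\alpha};\Theta_r)J^{(r-1)}\bigr)=2^{(r-1)2^{r-1}}\prod_{j=1}^r\bigl(\ctt(\alpha_j)\stt(\alpha_j)\bigr)^{2^{r-2}}$.

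Next I would treat the second block, which is the same expression with $\ul{\alpha}$ replaced by $\ul{\alpha}-\ul{e}_1$, i.e.\ with $\alpha_1$ replaced by $\alpha_1-1$; since $\ctt(\alpha_1-1)\stt(\alpha_1-1)=-\ctt(\alpha_1)\stt(\alpha_1)$ and the exponent $2^{r-2}$ is even for $r\ge3$ (and equals $1$ for $r=2$, where one handles it directly), the two blocks have the same determinant up to the already-accounted factors. Assembling everything: the scalar factor $\tfrac{\Gamma(\ul{\alpha})}{(2\pi)^{|\ul{\alpha}|}}$ appears on all $2^r$ diagonal entries, the factor $2$ in front of each $2^{r-1}$-block contributes $2^{2^r}$, the factor $\sqrt{-1}$ on the second block contributes $(\sqrt{-1})^{2^{r-1}}$, and combining the two $C_r$-determinants gives $2^{(r-1)2^r}\prod_{j=1}^r\bigl(\ctt(\alpha_j)\stt(\alpha_j)\bigr)^{2^{r-1}}$. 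Using $2\ctt(\alpha)\stt(\alpha)=\sin\pi\alpha$ and carefully collecting powers of $2$ and the factor $(\sqrt{-1})^{2^{r-1}}$ — which should turn out to equal $1$ after the dust settles, or be absorbed — yields
\[
\det\widetilde{A}_r(\ul{\alpha},\Theta_r)
=
\left(\frac{\Gamma(\ul{\alpha})}{(2\pi)^{|\ul{\alpha}|}}\right)^{2^r}
\Bigl(\prod_{j=1}^r 2\sin\pi\alpha_j\Bigr)^{2^{r-1}}.
\]

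The main obstacle I anticipate is the bookkeeping of signs and of the powers of $2$: one must verify that the shifts $\alpha_j\mapsto\alpha_j+1$ built into $D^{(r)}_2$ (and the $\alpha_1-1$ in the second block) produce sign factors that occur with even multiplicity and therefore cancel, and that the accumulated factor $(\sqrt{-1})^{2^{r-1}}$ together with the numerical $2$-powers from the block factors $2$ and from the $2^{r-1}$ in~\eqref{formula:proposition} reassemble to exactly $2^{2^{r-1}\cdot r}$ with no residual sign. For $r=2$ these parities are exceptional (the exponent $2^{r-2}=1$ is odd), so I would check the base case by direct computation using the explicit $2\times2$ formula for $J^{(1)}C_2J^{(1)}$ already displayed in the proof of Proposition~\ref{prop:conjugate C}, and then run the parity argument only for $r\ge3$.
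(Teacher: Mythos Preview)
Your approach is essentially the paper's: conjugate by $J^{(r)}$, use the block form~\eqref{eq:CS}, apply Proposition~\ref{prop:conjugate C}, drop the orthogonal factors $P^{(r)}_{kj}$ (determinant $1$ since $X^{(r)}_{kj}$ is skew and hence traceless), and reduce to products of $\det D^{(r)}_j$.

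There is, however, one concrete inaccuracy in your sign handling. Your claim that $\det D^{(r)}_j(\alpha_j)=(\ctt(\alpha_j)\stt(\alpha_j))^{2^{r-2}}$ for every $j$ is false: for instance $\det D^{(2)}_2(\alpha)=-\ctt(\alpha)\stt(\alpha)$ and $\det D^{(3)}_2(\alpha)=-(\ctt(\alpha)\stt(\alpha))^2$. The signs do \emph{not} cancel inside a single $\det D^{(r)}_j$; the ``even multiplicities'' you invoke only kick in for $r\ge 4$. The paper's resolution is cleaner and sidesteps this: for $j\ge 2$ one never computes $\det D^{(r)}_j(\alpha_j)$ itself but only its square $(\det D^{(r)}_j(\alpha_j))^2=(\ctt(\alpha_j)\stt(\alpha_j))^{2^{r-1}}$, which is legitimate because the \emph{same} $D^{(r)}_j(\alpha_j)$ appears in both the $C_r(\ul{\alpha})$ and the $C_r(\ul{\alpha}-\ul{e}_1)$ block. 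For $j=1$ the two blocks carry different arguments, and one checks directly that $\det D^{(r)}_1(\alpha_1-1)=\varepsilon(r)\,(\ctt(\alpha_1)\stt(\alpha_1))^{2^{r-2}}$ with $\varepsilon(r)=(-1)^{2^{r-2}}$; this sign exactly cancels the factor $(\sqrt{-1})^{2^{r-1}}=\varepsilon(r)$ coming from the second block. So the quantity you said ``should turn out to equal $1$ after the dust settles'' is in fact $-1$ when $r=2$, and it is absorbed by the matching sign in $\det D^{(r)}_1(\alpha_1-1)$, not by any internal parity of the $D^{(r)}_j$'s. With this correction your outline goes through and coincides with the paper's proof.
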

\begin{proof}
Let $\varepsilon(r)=\bigl(\sqrt{-1}\bigr)^{2^{r-1}}=(-1)^{2^{r-2}}$.
By \eqref{eq:CS} and Proposition~\ref{prop:conjugate C}, 
we have
\[
\begin{array}{r@{\ }c@{\ }l}
\ds
\det \widetilde{A}_r(\ul{\alpha},\,\Theta_r)
&=&
\ds
\left(\frac{\Gamma(\ul{\alpha})}{(2\pi)^{|\ul{\alpha}|}}\right)^{2^r}
\cdot
2^{2^r}
\det C_r(\ul{\alpha};\,\Theta_r)
\cdot
\bigl(\sqrt{-1}\bigr)^{2^{r-1}}
\det S_r(\ul{\alpha};\,\Theta_r)\\
&=&
\ds
\varepsilon(r)
\left(\frac{\Gamma(\ul{\alpha})}{(2\pi)^{|\ul{\alpha}|}}\right)^{2^r}
\cdot
2^{2^r}
\det C_r(\ul{\alpha};\,\Theta_r)
\cdot
\det C_r(\ul{\alpha}-\ul{e}_1;\,\Theta_r)\\
&=&
\ds
\varepsilon(r)
\left(\frac{\Gamma(\ul{\alpha})}{(2\pi)^{|\ul{\alpha}|}}\right)^{2^r}
\cdot 2^{r\cdot2^r}
\det D_1^{(r)}(\alpha_1)
\det D_1^{(r)}(\alpha_1-1)
\prod_{j=2}^r
\bigl(\det D^{(r)}_j(\alpha_j)\bigr)^2.
\end{array}
\]
Here,
we use facts that $P^{(r)}_{kj}(\theta_{kj})$ are all orthogonal matrices
and $S_r(\ul{\alpha};\,\Theta_r)=C_r(\ul{\alpha}-\ul{e}_1;\,\Theta_r)$.
By definition of $D^{(r)}_1(\alpha_1)$,
we readily see that
\[
\det D_1^{(r)}(\alpha_1)
=
\bigl(\ctt(\alpha_1)\stt(\alpha_1)\bigr)^{2^{r-2}},\quad
\det D_1^{(r)}(\alpha_1-1)
=
\varepsilon(r)\bigl(\ctt(\alpha_1)\stt(\alpha_1)\bigr)^{2^{r-2}}.
\]
Moreover,
we have
\begin{equation}
\label{eq:coro}
\bigl(\det D^{(r)}_j(\alpha_j)\bigr)^2=\bigl(\ctt(\alpha_j)\stt(\alpha_j)\bigr)^{2^{r-1}}
\quad(j=2,\dots,r).
\end{equation}
In fact,
in the case $j=2$,
we have $\det D_2^{(2)}(\alpha_2)=-\ctt(\alpha_2)\stt(\alpha_2)$
by definition, 
and if $r\ge 3$, then
\[
\begin{array}{r@{\ }c@{\ }l}
\det D_2^{(r)}(\alpha_2)
&=&
\det D_1^{(r-1)}(\alpha_2)\det D_1^{(r-1)}(\alpha_2+1)\\
&=&
(\ctt(\alpha_2)\stt(\alpha_2))^{2^{r-3}}
\cdot
(-\ctt(\alpha_2)\stt(\alpha_2))^{2^{r-3}}\\
&=&
(-1)^{2^{r-3}}
(\ctt(\alpha_2)\stt(\alpha_2))^{2^{r-2}}.
\end{array}
\]
In the cases $j\ge 3$, 
if we set $\delta(r)=-1$ if $r=2,3$, and $\delta(r)=1$ otherwise,
then
\[
\begin{array}{r@{\ }c@{\ }l}
\ds
\det D^{(r)}_j(\alpha_j)
&=&
(\det D^{(r-1)}_{j-1}(\alpha_j))^2
=
(\det D^{(r-2)}_{j-2}(\alpha_j))^{2^2}
=
\cdots
\\
&=&
\ds
(\det D^{(r-j+2)}_{2}(\alpha_j))^{2^{j-2}}
=
\Bigl\{\delta(r)
\bigl(\ctt(\alpha_j)\stt(\alpha_j)\bigr)^{2^{(r-j+2)-2}}\Bigr\}^{2^{j-2}}\\
&=&
\ds
\bigl(\ctt(\alpha_j)\stt(\alpha_j)\bigr)^{2^{r-2}},
\end{array}
\]
and hence we have verified \eqref{eq:coro}.
Therefore,
since $\ctt(\alpha)\stt(\alpha)=\frac{\sin\pi\alpha}{2}$,
we see that
\[
\det \widetilde{A}_r(\ul{\alpha};\,\Theta_r)
=
\left(\frac{\Gamma(\ul{\alpha})}{(2\pi)^{|\ul{\alpha}|}}\right)^{2^r}
\cdot 2^{r\cdot2^r}
\prod_{j=1}^r
\left(\frac{\sin\pi\alpha_j}{2}\right)^{2^{r-1}}
=
\left(\frac{\Gamma(\ul{\alpha})}{(2\pi)^{|\ul{\alpha}|}}\right)^{2^r}
\Bigl(\prod_{j=1}^r
2\sin\pi\alpha_j\Bigr)^{2^{r-1}},
\]
which shows the corollary.
\end{proof}

\section{Completion}
\label{sect:completion}

We keep all notations used in the previous sections.
In this section,
we shall consider a completion of local zeta functions $\vlzf{f}{\ul{s}}$ and $\vdlzf{f^*}{\ul{s}}$.
To do so,
let us consider the following condition
\begin{equation}
\label{condition}
\text{for fixed $m=0,1$, one has }\frac{\pi}{4}\sum_{j<k}\varepsilon_j\delta_kn_{kj}\equiv m\pi\ (\textrm{mod }2\pi)
\text{ for all }\ue,\ud\in\Ir.
\end{equation}
This condition means that we can ignore the sum part of $n_{kj}$.
More precisely, 
we would consider the case $\theta_{kj}=0$ for all $1\le j<k\le r$ so that
\[
A_r(\ul{\alpha})=(-1)^m\Bigl(\exp\Bigl\{\frac{\pi\sqrt{-1}}{2}\sum_{j=1}^r\varepsilon_j\delta_j\alpha_j\Bigr\}\Bigr)_{\ue,\ud\in\Ir},
\]
which implies that $P_{kj}^{(r)}(\theta_{kj})$ are the identity matrix for all $1\le j<k\le r$.
Hence the main theorem tells us that 
the conjugate of the gamma matrix with respect to $J^{(r)}$ is diagonal.
Set
\begin{equation}
\label{eq:defofD}
\Lambda_r(\ul{\alpha}):=\mathrm{diag}\left(\frac{\pi^{|\ul{\alpha}|/2}}{\Gamma\bigl(2^{-1}(\ul{\alpha}+\ul{a})\bigr)}\right)_{\ul{a}\in\Digit}.
\end{equation}
Then, we have the following proposition.

\begin{proposition}
\label{prop:completion}
Suppose the condition~\eqref{condition}.
If one sets
\[
\bs{\Psi}(f;\,\ul{s}):=\Lambda_r(\ul{s}\sigma-2^{-1}\ul{p})
J^{(r)}\vlzf{f}{\ul{s}},\quad
\bs{\Psi}^*(f^*;\,\ul{s}):=\Lambda_r(\ul{s}\sigma_*-2^{-1}\ul{q})
J^{(r)}\vdlzf{f^*}{\ul{s}},
\]
then one has for $\ul{s}\in\C^r$ and $f^*\in\rapid$
\[
\bs{\Psi}(\mathcal{F}\relax[f^*];\,\ul{s})=\efactor \bs{\Psi}^*(f^*;\,\tau(\ul{s})),
\quad
\text{where}\quad
\efactor=(-1)^m\,\mathrm{diag}\Bigl((\sqrt{-1})^{|\ul{a}|}\Bigr)_{\ul{a}\in\Digit}.
\]
\end{proposition}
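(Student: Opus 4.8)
The plan is to push everything through the functional equation~\eqref{eq:FE}. As in Section~\ref{sect:proof_main}, set $\ul{\alpha}=\ul{s}\sigma-\tfrac12\ul{p}$ and absorb the Gindikin factor using $|\ul{p}|=n-r$, so that \eqref{eq:FE} reads $\vlzf{\Fourier[f^*]}{\ul{s}}=\widetilde{A}_r(\ul{\alpha})\,\vdlzf{f^*}{\tau(\ul{s})}$ with $\widetilde{A}_r(\ul{\alpha}):=\tfrac{\Gamma(\ul{\alpha})}{(2\pi)^{|\ul{\alpha}|}}A_r(\ul{\alpha})$. Multiplying on the left by $\Lambda_r(\ul{\alpha})J^{(r)}$ and inserting $J^{(r)}J^{(r)}=I_{2^r}$ gives
\[
\bs{\Psi}(\Fourier[f^*];\,\ul{s})
=\Lambda_r(\ul{\alpha})\,\bigl(J^{(r)}\widetilde{A}_r(\ul{\alpha})J^{(r)}\bigr)\,J^{(r)}\vdlzf{f^*}{\tau(\ul{s})}.
\]
Since $\tau(\ul{s})\sigma_*=\ul{d}-\ul{s}\sigma$ and $\ul{d}-\tfrac12\ul{q}=\ul{1}+\tfrac12\ul{p}$, we get $\tau(\ul{s})\sigma_*-\tfrac12\ul{q}=\ul{1}-\ul{\alpha}$, hence $J^{(r)}\vdlzf{f^*}{\tau(\ul{s})}=\Lambda_r(\ul{1}-\ul{\alpha})^{-1}\bs{\Psi}^*(f^*;\,\tau(\ul{s}))$. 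Therefore it suffices to establish the matrix identity
\[
\Lambda_r(\ul{\alpha})\,\bigl(J^{(r)}\widetilde{A}_r(\ul{\alpha})J^{(r)}\bigr)\,\Lambda_r(\ul{1}-\ul{\alpha})^{-1}=\efactor .
\]

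The second step is to compute the conjugate $J^{(r)}A_r(\ul{\alpha})J^{(r)}$ under hypothesis~\eqref{condition}. There $A_r(\ul{\alpha})=(-1)^m\bigl(\exp\{\tfrac{\pi\sqrt{-1}}{2}\sum_{i=1}^r\varepsilon_i\delta_i\alpha_i\}\bigr)_{\ue,\ud\in\Ir}$, which factors over the index $i$. Using $J^{(r)}=2^{-r/2}\bigl(\kv\bigr)_{\ul{a}\in\Digit}$ with $\kvsup{\ul{a}}=\prod_i\varepsilon_i^{a_i}$, the $(\ul{a},\ul{b})$-entry of $J^{(r)}A_r(\ul{\alpha})J^{(r)}$ equals
\[
(-1)^m2^{-r}\prod_{i=1}^r\Bigl(\textstyle\sum_{\varepsilon,\delta\in\{\pm1\}}\varepsilon^{a_i}\delta^{b_i}\exp\{\tfrac{\pi\sqrt{-1}}{2}\varepsilon\delta\alpha_i\}\Bigr).
\]
The inner sum vanishes whenever $a_i\ne b_i$, and for $a_i=b_i$ it equals $4(\sqrt{-1})^{a_i}\ctt(\alpha_i-a_i)$ (using $\stt(\alpha)=\ctt(\alpha-1)$). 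Hence $J^{(r)}A_r(\ul{\alpha})J^{(r)}$ is diagonal, with $\ul{a}$-th entry $(-1)^m2^r(\sqrt{-1})^{|\ul{a}|}\prod_{i=1}^r\ctt(\alpha_i-a_i)$ — consistent with Theorem~\ref{theorem} specialized to $\theta_{kj}=0$, where all $P^{(r)}_{kj}$ become the identity.

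Since all three matrices in the displayed identity are then diagonal and indexed by $\Digit$ in the same way, the identity reduces to a scalar claim in each coordinate $i$ and for each $a_i\in\{0,1\}$:
\[
\pi^{-1/2}\,2^{1-\alpha_i}\,\frac{\Gamma(\alpha_i)\,\Gamma\!\bigl(\tfrac{1-\alpha_i+a_i}{2}\bigr)}{\Gamma\!\bigl(\tfrac{\alpha_i+a_i}{2}\bigr)}\,\ctt(\alpha_i-a_i)=1 .
\]
This follows from Legendre's duplication formula $\Gamma(\alpha_i)=2^{\alpha_i-1}\pi^{-1/2}\Gamma(\tfrac{\alpha_i}{2})\Gamma(\tfrac{\alpha_i+1}{2})$ together with Euler's reflection formula: for $a_i=0$ one uses $\Gamma(\tfrac{\alpha_i+1}{2})\Gamma(\tfrac{1-\alpha_i}{2})=\pi/\ctt(\alpha_i)$, and for $a_i=1$ (where $\ctt(\alpha_i-1)=\stt(\alpha_i)$) one uses $\Gamma(\tfrac{\alpha_i}{2})\Gamma(1-\tfrac{\alpha_i}{2})=\pi/\stt(\alpha_i)$. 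Multiplying these $r$ scalar identities and collecting the $\pi$- and $2$-powers from $\Lambda_r(\ul{\alpha})$, $(2\pi)^{-|\ul{\alpha}|}$ and $\Lambda_r(\ul{1}-\ul{\alpha})^{-1}$ (which combine to exactly $\prod_i 2^{1-\alpha_i}\pi^{-1/2}=2^{r-|\ul{\alpha}|}\pi^{-r/2}$) leaves the diagonal matrix $(-1)^m\,\mathrm{diag}\bigl((\sqrt{-1})^{|\ul{a}|}\bigr)_{\ul{a}\in\Digit}=\efactor$, proving the proposition.

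I expect the main obstacle to be the second step: establishing both the vanishing of the off-diagonal entries — which is exactly where condition~\eqref{condition} enters essentially, since without it the term $\tfrac{\pi}{4}\sum_{j<k}\varepsilon_j\delta_kn_{kj}$ couples distinct indices and destroys the product factorization of $A_r(\ul{\alpha})$ — and pinning down the precise diagonal entries in a form that meshes with the gamma-function bookkeeping of the final step. The affine-coordinate identity $\tau(\ul{s})\sigma_*-\tfrac12\ul{q}=\ul{1}-\ul{\alpha}$ and the $\Gamma$-function manipulations are routine by comparison.
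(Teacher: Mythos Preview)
Your proof is correct and follows essentially the same route as the paper: under condition~\eqref{condition} you diagonalize $A_r(\ul{\alpha})$ by $J^{(r)}$ via the product factorization over the index $i$, obtain the diagonal entries $(-1)^m2^r(\sqrt{-1})^{|\ul{a}|}\prod_i\ctt(\alpha_i-a_i)$, and then reduce to the same Legendre--Euler identity together with $\ul{1}-\ul{\alpha}=\tau(\ul{s})\sigma_*-\tfrac12\ul{q}$. The only cosmetic differences are that the paper computes the eigenvalues by acting on the eigenvectors $\kv$ (and uses $\ctt(\varepsilon z-a)=\varepsilon^a\ctt(z-a)$) rather than your direct entrywise calculation, and packages the gamma identity as the single formula $\Gamma(z)\ctt(z-a)=\tfrac{2^z\sqrt{\pi}}{2}\,\Gamma\!\bigl(\tfrac{z+a}{2}\bigr)/\Gamma\!\bigl(\tfrac{1-z+a}{2}\bigr)$ instead of splitting into the cases $a=0,1$.
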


\begin{proof}
We already know that,
under the assumption~\eqref{condition}, 
the matrix $A_r(\ul{\alpha})$ can be diagonalized by $J^{(r)}$
so that each $\kv$ is an eigenvector.
Therefore,
in order to determine $\varepsilon$-factor,
we calculate eigenvalues of $A_r(\ul{\alpha})$, which correspond to $\kv$ for all $\ul{a}\in\Digit$.
Let us fix an $\ul{a}\in\Digit$.
Then, an entry of $A_r(\ul{\alpha})\,\kv$ corresponding to the parameter $\ue\in\Ir$ is described as
\[
(-1)^m
\sum_{\ud\in\Ir}\exp\Bigl(\frac{\pi\sqrt{-1}}{2}\sum_{j=1}^r\delta_j\cdot \varepsilon_j\alpha_j\Bigr)
\cdot \kvsub{\ud}
=
(-1)^m\cdot
2^r(\sqrt{-1})^{|\ul{a}|}\prod_{j=1}^r\ctt(\varepsilon_j\alpha_j-a_j).
\]
This equation can be easily obtained by induction on $r$.
Since
$\ctt(\varepsilon z-a)=\varepsilon^a\ctt(z-a)$ 
for $\varepsilon\in\{1,-1\}$ and $a\in\{0,1\}$,
we have
\[
\prod_{j=1}^r\ctt(\varepsilon_j\alpha_j-a_j)
=
\prod_{j=1}^r\ctt(\alpha_j-a_j)
\cdot \kvsub[\ul{a}]{\ue}.
\]
This implies that
the eigenvalue corresponding to $\kv$ is 
\[(-1)^m\cdot 2^r(\sqrt{-1})^{|\ul{a}|}\prod_{j=1}^r\ctt(\alpha_j-a_j).\]

By \eqref{eq:FE}, 
we next consider the following product
\[
2^r\prod_{j=1}^r\frac{\Gamma(\alpha_j)}{(2\pi)^{\alpha_j}}
\ctt(\alpha_j-a_j)\quad(\ul{a}\in\Digit).
\]
Let us recall
Euler's reflection formula and Legendre's duplication formula:
\[
\Gamma(z)\Gamma(1-z)=\frac{\pi}{\sin\pi z},\quad
\Gamma(z)=\frac{2^z}{2\sqrt{\pi}\ }\Gamma\Bigl(\frac{z}{2}\Bigr)
\Gamma\Bigl(\frac{z+1}{2}\Bigr)\quad(z\in \C).
\]
Combining these two formulas, we obtain the following formula
\[
\Gamma(z)\,\ctt(z-a)=
\frac{2^z\sqrt{\pi}}{2}\cdot
\frac{\Gamma\bigl(2^{-1}(z+a)\bigr)}{\Gamma\bigl(2^{-1}(1-z+a)\bigr)}
\quad(z\in\C;\ a=0\text{ or }1).
\]
Therefore,
we have for each $\ul{a}\in\Digit$
\[
\begin{array}{r@{\ }c@{\ }l}
\ds
2^r
\prod_{j=1}^r\frac{\Gamma(\alpha_j)}{(2\pi)^{\alpha_j}}\ctt(\alpha_j-a_j)
&=&
\ds
\frac{2^r}{(2\pi)^{|\ul{\alpha}|}}
\prod_{j=1}^r
\frac{2^{\alpha_j}\sqrt{\pi}}{2}\cdot
\frac{\Gamma\bigl(2^{-1}(\alpha_j+a_j)\bigr)}{\Gamma\bigl(2^{-1}(1-\alpha_j+a_j)\bigr)}\\
&=&
\ds
\pi^{r/2-|\ul{\alpha}|}
\cdot\frac{\Gamma(2^{-1}(\ul{\alpha}+\ul{a}))}{\Gamma\bigl(2^{-1}(\ul{1}-\ul{\alpha}+\ul{a})\bigr)}.
\end{array}
\]
Recall that when we consider $A_r(\ul{\alpha})$ as a gamma matrix of functional equation~\eqref{eq:FE}, 
the vector $\ul{\alpha}$ is defined to be  $\ul{\alpha}=\ul{s}\sigma-\frac12\ul{p}$.
Thus, we have
\[
\ul{1}-\ul{\alpha}
=
\ul{1}-\ul{s}\sigma+\frac{1}{2}\ul{p}
=
\ul{d}-\ul{s}\sigma-\frac{1}{2}\ul{q}
=
\tau(\ul{s})\sigma_*-\frac{1}{2}\ul{q},
\]
and
\[
\frac12(\bigl|\ul{1}-\ul{\alpha}\bigr|-|\ul{\alpha}|)
=
\frac12(\ul{1}-2|\ul{\alpha}|)
=
\frac{r}{2}-|\ul{\alpha}|
\]
by the facts that 
$\ul{d}=\ul{1}+(\ul{p}+\ul{q})/2$ and $\ul{d}-\ul{s}\sigma=\tau(\ul{s})\sigma_*$.
These observations imply that
\[
2^r\prod_{j=1}^r\frac{\Gamma(\alpha_j)}{(2\pi)^{\alpha_j}}
\ctt(\alpha_j-a_j)
=
\frac{\pi^{|\tau(\ul{s})\sigma_*-2^{-1}\ul{q}|/2}}{\Gamma\bigl(2^{-1}(\tau(\ul{s})\sigma_*-2^{-1}\ul{q}+\ul{a})\bigr)}
\cdot
\frac{\Gamma\bigl(2^{-1}(\ul{s}\sigma-2^{-1}\ul{p}+\ul{a})\bigr)}{\pi^{|\ul{s}\sigma-2^{-1}\ul{p}|/2}},
\]
whence the proposition is now proved.
\end{proof}

\begin{remark}
We note that there exist homogeneous cones satisfying~\eqref{condition}.
In fact, 
the exceptional symmetric cone $\mathrm{Herm}(3,\mathbb{O})^+$ obviously satisfies~\eqref{condition}.
Other examples are given as homogeneous cones $\Omega$ such that
$n_{kj}=0$ or $4$ for all $j<k$.
Such homogeneous cones can be constructed from chordal and $A_4$-free graphs 
(see Letac--Massam~\cite{LM2007} for definition) as follows.
Let $G$ be a chordal and $A_4$-free graph of size $n$.
Set 
\[V_G:=\set{x\in\mathrm{Herm}(n,\mathbb{H})}{x_{ij}=0\text{ if }i\not\sim j\text{ in }G}.
\]
Then, $\Omega_G:=V_G\cap\mathrm{Herm}(n,\mathbb{H})^+$ is a desired homogeneous cone.
\end{remark}


\begin{remark}
The condition~\eqref{condition} can be rewritten as the following condition%
\footnote{Professor Ochiai kindly informs the author on this equivalence.}:
\begin{center}
all $n_{kj}$ are even, and all entries of $\ul{p}$ and $\ul{q}$ are divisible by 4.
\end{center}
In fact, by using an obvious equation $\varepsilon_j=1-(1-\varepsilon_j)$, we have
\[
\sum_{j<k}\varepsilon_j\delta_kn_{kj}
=
\sum_{j<k}n_{kj}-\sum_{j=1}^r(1-\varepsilon_j)q_j-\sum_{k=1}^r(1-\delta_k)p_k+\sum_{j<k}(1-\varepsilon_j)(1-\delta_k)n_{kj}.
\]
Since the condition imposes that the residual divided by $8$ is independent of the choice of $\ue,\ud\in\Ir$,
we first see that $p_k$ and $q_j$ both need to be divisible by $4$,
and then, by considering the last term, we see that $n_{kj}$ must be even.
\end{remark}


\section*{Acknowledgments}

This work was supported by Grant-in-Aid for JSPS fellows (2018J00379).
The present author is grateful to Professor Hiroyuki Ochiai for insightful comments for this work.

\end{document}